\newtheorem{theorem}{Theorem}[section]
\newtheorem{lemma}[theorem]{Lemma}
\newtheorem{proposition}[theorem]{Proposition}
\newtheorem{corollary}[theorem]{Corollary}
\theoremstyle{definition}
\theoremstyle{remark}
\newtheorem{remark}[theorem]{Remark}
\numberwithin{equation}{section}
\begin{document}
\setcounter{page}{1}

\title[Inequalities involving Heinz and Heron Functional Means]
{Some Inequalities involving Heron and Heinz Means of two Convex Functionals}
\author[M. Ra\"{\i}ssouli, S. Furuichi]{Mustapha Ra\"{\i}ssouli$^{1,2}$ and Shigeru Furuichi$^{3}$}
\address{$^1$ Department of Mathematics, Science Faculty, Taibah University,
Al Madinah Al Munawwarah, P.O.Box 30097, Zip Code 41477,
Kingdom of Saudi Arabia.}

\address{$^2$ Department of Mathematics, Faculty of Science, Moulay Ismail University, Meknes, Morocco.}

\address{$^3$ Department of Information Science, College of Humanities and Sciences, Nihon University, 3-25-40, Sakurajyousui, Setagaya-ku, Tokyo, 156-8550, Japan.}

\email{\textcolor[rgb]{0.00,0.00,0.84}{raissouli.mustapha@gmail.com}}
\email{\textcolor[rgb]{0.00,0.00,0.84}{furuichi@chs.nihon-u.ac.jp}}

\subjclass[2010]{Primary 46N10; Secondary 46A20, 47A63, 47N10.}

\keywords{Convex Analysis, Functional Heron Mean, Functional Heinz Mean, Functional Mean-Inequalities}

\date{Received: xxxxxx; Revised: yyyyyy; Accepted: zzzzzz.}

\begin{abstract}
In this paper we first introduce the Heron and Heinz means of two convex functionals. Afterwards, some inequalities involving these functional means are investigated. The operator versions of our theoretical functional results are immediately deduced. We also obtain new refinements of some known operator inequalities via our functional approach in a fast and nice way.
\end{abstract}

\maketitle

\section{\bf Heron and Heinz Means}

Let $a,b>0$ and $\lambda\in[0,1]$ be real numbers. The following expressions
\begin{equation}\label{eq1}
a\nabla_{\lambda}b=(1-\lambda)a+\lambda b,\;\; a\sharp_{\lambda}b=a^{1-\lambda}b^{\lambda}
\end{equation}
are known as the $\lambda$-weighted arithmetic mean and $\lambda$-weighted geometric mean, respectively. They satisfy the following
\begin{equation}\label{2}
a\sharp_{\lambda}b\leq a\nabla_{\lambda}b
\end{equation}
known as the Young's inequality. Some refinements and reverses of \eqref{2} have been discussed in the literature. In particular, the following result has been proved in \cite{KM},
\begin{equation}\label{2a}
r_{\lambda}\big(\sqrt{a}-\sqrt{b}\big)^2\leq a\nabla_{\lambda}b-a\sharp_{\lambda}b \leq (1-r_{\lambda})\big(\sqrt{a}-\sqrt{b}\big)^2,
\end{equation}
where we set
\begin{equation}\label{2b}
r_{\lambda}:=\min(\lambda,1-\lambda).
\end{equation}

For more refinements and reverses of the Young's inequality, we refer the interested reader to \cite{F,M1,T} and the related references cited therein.

From the previous means we introduce the following expressions
\begin{equation}\label{3a}
K_{\lambda}(a,b)=(1-\lambda)\sqrt{ab}+\lambda\frac{a+b}{2},
\end{equation}
\begin{equation}\label{3b}
 HZ_{\lambda}(a,b)=\frac{a^{1-\lambda}b^{\lambda}+a^{\lambda}b^{1-\lambda}}{2}
\end{equation}
known in the literature as the Heron and Heinz means, respectively. They satisfy the following inequalities
\begin{equation}\label{4a}
\sqrt{ab}\leq K_{\lambda}(a,b)\leq\frac{a+b}{2},
\end{equation}
\begin{equation}\label{4b}
\sqrt{ab}\leq HZ_{\lambda}(a,b)\leq\frac{a+b}{2}.
\end{equation}

An inequality between the Heron and Heinz means was proved in \cite{B} and is as follows
\begin{equation}\label{5}
HZ_{\lambda}(a,b)\leq K_{\alpha(\lambda)}(a,b),
\end{equation}
where $\alpha(\lambda)=(2\lambda-1)^2$ for any $\lambda\in[0,1]$. It is easy to check that \eqref{5} is better than the following inequality
$$HZ_{\lambda}(a,b)+r_{\lambda}\big(\sqrt{a}-\sqrt{b}\big)^2\leq\frac{a+b}{2}$$
which has been later obtained in \cite{KM}, where $r_{\lambda}$ is defined by \eqref{2b}.

A reverse of Heinz inequality was recently proved in \cite{KSS} as follows
\begin{equation}\label{5a}
HZ_{\lambda}(a,b)\geq\frac{a+b}{2}-\frac{1}{2}\lambda(1-\lambda)(b-a)\log(b/a).
\end{equation}
Another reversed version of Heinz inequality was already showed in \cite{KM2} and reads as follows
\begin{equation}\label{5b}
\Big(HZ_{\lambda}(a,b)\Big)^2\geq\Big(\frac{a+b}{2}\Big)^2-\frac{1}{2}(1-r_{\lambda})(a-b)^2.
\end{equation}

In \cite{KSS}, the authors mentioned some comments about comparison between \eqref{5a} and \eqref{5b} that we present in the following remark (see page 745).

\begin{remark}
Numerical experiments show that neither \eqref{5a} nor \eqref{5b} is uniformly better than the
other. However, these experiments show that, for most values of $\lambda$ , \eqref{5a} is better than
\eqref{5b} when $a/b$ is relatively small and \eqref{5b} is better when $a/b$ is large.
\end{remark}

The extension of the previous means, from the case that the variables are positive real numbers to the case that the arguments are positive operators, has been investigated in the literature. Let $H$ be a complex Hilbert space and ${\mathcal B}(H)$ be the $\mathbb{C}^*$-algebra of bounded linear operators acting on $H$. We denote by ${\mathcal B}^{+*}(H)$ the open cone of all (self-adjoint) positive invertible operators in ${\mathcal B}(H)$. For $A,B\in {\mathcal B}^{+*}(H)$, the following expressions
\begin{equation*}
A\nabla_{\lambda}B:=(1-\lambda)A+\lambda B=B\nabla_{1-\lambda} A,
\end{equation*}
\begin{equation*}
A\sharp_{\lambda}B:=A^{1/2}\Big(A^{-1/2}BA^{-1/2}\Big)^{\lambda}A^{1/2}=B\sharp_{1-\lambda}A
\end{equation*}
are known as the $\lambda$-weighted operator mean and $\lambda$-weighted geometric operator mean of $A$ and $B$, respectively. For $\lambda=1/2$, they are simply denoted by $A\nabla B$ and $A\sharp B$, respectively. These operator means satisfy the following inequality
\begin{equation}\label{6}
A\sharp_{\lambda}B\leq A\nabla_{\lambda}B,
\end{equation}
which is an operator version of the Young's inequality \eqref{2}. The notation $\leq$ refers here for the L\"{o}wner partial order defined by: $T\leq S$ if and only if $T$ and $S$ are self-adjoint and $S-T$ is positive.

An operator version of \eqref{2a} has also been established in \cite{KM} and reads as follows
\begin{equation}\label{65}
2r_{\lambda}\big(A\nabla B-A\sharp B\big) \leq A\nabla_{\lambda}B -A\sharp_{\lambda}B \leq2(1-r_{\lambda})\big(A\nabla B-A\sharp B\big),
\end{equation}
where $r_{\lambda}$ is defined in \eqref{2b}. In fact, according to the Kubo-Ando theory \cite{KA}, \eqref{65} can be immediately deduced from \eqref{2a}.

By analogy with the scalar case, the Heron and Heinz operator means are, respectively, defined as follows
\begin{equation}\label{7a}
K_{\lambda}(A,B)=(1-\lambda)A\sharp B+\lambda A\nabla B,
\end{equation}
\begin{equation}\label{7b}
HZ_{\lambda}(A,B)=\frac{A\sharp_{\lambda}B+A\sharp_{1-\lambda}B}{2}.
\end{equation}

The following operator inequalities, extending respectively \eqref{4a} and \eqref{4b} for operator arguments, have also been proved in the literature, see \cite{YR} and the related references cited therein.
\begin{equation}\label{8a}
A\sharp B\leq K_{\lambda}(A,B)\leq A\nabla B,
\end{equation}
\begin{equation}\label{8b}
A\sharp B\leq HZ_{\lambda}(A,B)\leq A\nabla B.
\end{equation}

The following refinement of the inequality in \eqref{8a} has been recently obtained in \cite{YR}
\begin{multline}\label{9}
\lambda(1-\lambda)\big(A\nabla B-A\sharp B\big)+A\sharp B\leq K_{\lambda}(A,B)\\
\leq A\nabla B-\lambda(1-\lambda)\big(A\nabla B-A\sharp B\big).
\end{multline}

For more inequalities related to the Heron and Heinz means involving matrix and operator arguments, we refer the reader to \cite{DDF,FFN, I2,Kho, KCMSS, KM, KM2, KSS, LS, Z} and the related references cited therein.

\section{\bf Functional Version}

The previous operator means have been extended from the case that the variables are positive operators to the case that the variables are convex functionals, see \cite{RB}. Let us denote by $\Gamma_0(H)$ the cone of all $f:H\longrightarrow{\mathbb R}\cup\{+\infty\}$ which are convex lower semi-continuous and not identically equal to $+\infty$. Throughout this paper, we use the following notation:
$${\mathcal D}(H)=\Big\{(f,g)\in \Gamma_0(H)\times \Gamma_0(H) :\;\;{\rm dom}\;f\cap{\rm dom}\;g\neq\emptyset\Big\},$$
where ${\rm dom}\;f$ refers to the effective domain of $f:H\longrightarrow{\mathbb R}\cup\{+\infty\}$ defined by
$${\rm dom}\;f=\Big\{x\in H,\;\; f(x)<+\infty\Big\}.$$

If $(f,g)\in {\mathcal D}(H)$ and $\lambda\in(0,1)$, then the following expressions
\begin{equation}\label{10a}
{\mathcal A}_{\lambda}(f,g):=(1-\lambda)f+\lambda g,
\end{equation}
\begin{equation}\label{10b}
{\mathcal
G}_{\lambda}(f,g):=\displaystyle{\frac{\sin(\pi\lambda)}{\pi}\int_{0}^1\frac{t^{\lambda-1}}{(1-t)^{\lambda}}}\Big((1-t)f^*+tg^*\Big)^*dt
\end{equation}
are called, by analogy, the $\lambda$-weighted functional arithmetic mean and $\lambda$-weighted functional geometric mean of $f$ and $g$, respectively. For $\lambda=1/2$, they are simply denoted by ${\mathcal A}(f,g)$ and ${\mathcal G}(f,g)$, respectively. Here, the notation $f^*$ refers to the Fenchel conjugate of any $f:H\longrightarrow{\mathbb R}\cup\{+\infty\}$ defined through
\begin{equation}\label{13}
\forall x^*\in H\;\;\;\;\;\;\; f^*(x^*)=\sup_{x\in H}\Big\{\Re e\langle x^*,x\rangle-f(x)\Big\}.
\end{equation}

It is easy to see that
\begin{equation}\label{12}
{\mathcal
G}_{\lambda}(f,g)=\displaystyle{\frac{\sin(\pi\lambda)}{\pi}\int_{0}^1\frac{1}{t\sharp_{\lambda}(1-t)}}{\mathcal H}_t(f,g)dt,
\end{equation}
where
\begin{equation}\label{125}
{\mathcal H}_{\lambda}(f,g):=\Big((1-\lambda)f^*+\lambda g^*\Big)^*
\end{equation}
is the so-called $\lambda$-weighted functional harmonic mean of $f$ and $g$. For $\lambda=1/2$, we simply denote it by ${\mathcal H}(f,g)$.

With this, we can write
$$\forall x^*\in H\;\;\;\;\;\;\; f^*(x^*)=\sup_{x\in{\rm dom}\;f}\Big\{\Re e\langle x^*,x\rangle-f(x)\Big\},$$
provided that ${\rm dom}\;f\neq\emptyset$. As supremum of a family of affine (so convex) functions, $f^*$ is always convex even when $f$ is not.

We extend the previous functional means on the whole interval $[0,1]$ by setting, see \cite{RF}
\begin{equation}\label{eq133}
{\mathcal A}_{0}(f,g)={\mathcal G}_{0}(f,g)={\mathcal H}_{0}(f,g)=f,\;\;{\mathcal A}_{1}(f,g)={\mathcal G}_{1}(f,g)={\mathcal H}_{1}(f,g)=g.
\end{equation}

The previous functional means satisfy the following relationships
\begin{equation}\label{135}
{\mathcal A}_{\lambda}(f,g)={\mathcal A}_{1-\lambda}(g,f),\; {\mathcal H}_{\lambda}(f,g)={\mathcal H}_{1-\lambda}(g,f),\;
{\mathcal G}_{\lambda}(f,g)={\mathcal G}_{1-\lambda}(g,f),
\end{equation}
for any $(f,g)\in{\mathcal D}(H)$ and $\lambda\in[0,1]$. The two first relationships are immediate while the proof of the third one can be found in \cite{RB}. In particular, if $\lambda=1/2$, the three previous functional means are symmetric in $f$ and $g$.

We have the following double inequality
\begin{equation}\label{14}
{\mathcal H}_{\lambda}(f,g)\leq{\mathcal
G}_{\lambda}(f,g)\leq {\mathcal A}_{\lambda}(f,g),
\end{equation}
whose the right inequality is the functional version of the Young's operator inequality \eqref{6}. Here the symbol $\leq$ denotes the point-wise order defined by, $f\leq g$ if and only if $f(x)\leq g(x)$ for all $x\in H$.

\begin{remark}\label{rem1}
We adopt here the conventions $0\cdot(+\infty)=+\infty$ and $(+\infty)-(+\infty)=+\infty$, as usual in convex analysis. Since our involved functionals $f$ and/or $g$ can take the value $+\infty$ we then mention the following:\\
(i) The relations \eqref{eq133} are not immediate from their related functional means \eqref{10a}, \eqref{10b} and \eqref{125}, respectively.\\
(ii) We must be careful with any proof of functional equality or inequality. As example, the equalities $f-f=0$ and $f-g=-(g-f)$ are not always true. Also, the two inequalities $f\leq g$ and $f-g\leq0$ are not always equivalent whereas $f\leq g$ and $g-f\geq0$ are equivalent.
\end{remark}

The previous functional means are, respectively, extensions of their related operator means in the following sense
\begin{equation}\label{145}
{\mathcal A}_{\lambda}(f_A,f_B)=f_{A\nabla_\lambda B},\; {\mathcal
G}_{\lambda}(f_A,f_B)=f_{A\sharp_{\lambda}B},\; {\mathcal H}_{\lambda}(f_A,f_B)=f_{A!_{\lambda}B},
\end{equation}
where
$$A!_{\lambda}B:=\big((1-\lambda)A^{-1}+\lambda B^{-1}\big)^{-1}$$
stands for the $\lambda$-weighted harmonic operator mean of $A$ and $B$ and the notation $f_A$ refers to the convex quadratic form generated by the positive operator $A$, i.e. $f_A(x)=(1/2)\langle Ax,x\rangle$ for all $x\in H$. This because $f_A^*(x^*)=(1/2)\langle A^{-1}x^*,x^*\rangle$, or in short $f_A^*=f_{A^{-1}}$, for any $A\in{\mathcal B}^{+*}(H)$. We also mention that, since $A$ and $B$ are self-adjoint then, $f_A=f_B$ if and only if $A=B$.\\

For all $(f,g)\in{\mathcal D}(H)$ and $\lambda\in[0,1]$, we introduce the following expressions
\begin{equation}\label{15}
{\mathcal K}_{\lambda}(f,g)=(1-\lambda){\mathcal G}(f,g)+\lambda{\mathcal A}(f,g)
\end{equation}
and
\begin{equation}\label{16}
{\mathcal{HZ}}_{\lambda}(f,g)=\frac{1}{2}\Big({\mathcal
G}_{\lambda}(f,g)+{\mathcal
G}_{1-\lambda}(f,g)\Big)
\end{equation}
which will be called the Heron functional mean and the Heinz functional mean of $f$ and $g$, respectively. It is clear that ${\mathcal K}_{\lambda}(f,g)$ is symmetric in $f$ and $g$ and, due to the last relation of \eqref{135}, ${\mathcal{HZ}}_{\lambda}(f,g)$ is also symmetric in $f$ and $g$. We mention that we have ${\mathcal{HZ}}_{\lambda}(f,g)={\mathcal{HZ}}_{1-\lambda}(f,g)$ while in general ${\mathcal K}_{\lambda}(f,g)\neq{\mathcal K}_{1-\lambda}(f,g)$, unless $\lambda=1/2$.

By virtue of \eqref{145}, it is not hard to see that, for any $A,B\in{\mathcal B}^{+*}(H)$, we have
\begin{equation}\label{17}
{\mathcal K}_{\lambda}(f_A,f_B)=f_{K_{\lambda}(A,B)},\;\; {\mathcal{HZ}}_{\lambda}(f_A,f_B)=f_{HZ_{\lambda}(A,B)}.
\end{equation}
Furthermore, according to \eqref{14} for $\lambda=1/2$ we immediately deduce that the functional map $\lambda\longmapsto{\mathcal K}_{\lambda}(f,g)$, for fixed $(f,g)\in{\mathcal D}(H)$, is point-wisely increasing in $\lambda\in[0,1]$. This gives the functional version of the operator inequality \eqref{8a} that reads as follows
\begin{equation}\label{18}
{\mathcal G}(f,g)\leq {\mathcal K}_{\lambda}(f,g)\leq{\mathcal A}(f,g),
\end{equation}
which, in its turn, immediately yields \eqref{8a} by virtue of \eqref{145} and \eqref{17}.

Now, a question arises from the above: Is the functional version of \eqref{8b} true when the operator variables $A$ and $B$ are replaced by convex functionals. Precisely, is the following
\begin{equation}\label{19}
{\mathcal G}(f,g)\leq {\mathcal{HZ}}_{\lambda}(f,g)\leq{\mathcal A}(f,g)
\end{equation}
hold for any $(f,g)\in{\mathcal D}(H)$ and $\lambda\in[0,1]$. An affirmative answer to this latter question will be discussed in the next section.

\begin{remark}
Usually, for proving an operator inequality like \eqref{6}, \eqref{65}, \eqref{8a}, \eqref{8b} and \eqref{9} we start from its analog for scalar case and we then proceed by using the techniques of functional calculus. In this paper, after defining the Heron and Heinz means of two convex functionals, we establish some inequalities involving these functional means. We also obtain new refinements of some known operator inequalities. Our approach is with functional character and the proofs of our theoretical results are short, simple and nice and do not need to use the techniques of functional calculus.
\end{remark}

\section{\bf The Main Results}

We preserve the same notations as in the previous sections. Our first main result is about a refinement of \eqref{18} recited in the following.

\begin{theorem}\label{th1}
For any $(f,g)\in{\mathcal D}(H)$ and $\lambda\in[0,1]$ there hold
\begin{multline}\label{20}
r\Big({\mathcal A}(f,g)-{\mathcal
G}(f,g)\Big)+{\mathcal
G}(f,g)={\mathcal K}_{r}(f,g)\leq{\mathcal K}_{\lambda}(f,g)\\
\leq{\mathcal K}_{1-r}(f,g)={\mathcal A}(f,g)-r\Big({\mathcal A}(f,g)-{\mathcal
G}(f,g)\Big),
\end{multline}
where we set $r=r_{\lambda}:=\min(\lambda,1-\lambda)$ for the sake of simplicity.
\end{theorem}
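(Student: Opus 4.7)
The plan is to reduce everything to the elementary monotonicity of the scalar map $\lambda\mapsto(1-\lambda)u+\lambda v$ when $u\le v$, applied pointwise with $u={\mathcal G}(f,g)(x)$ and $v={\mathcal A}(f,g)(x)$. The ingredients are already at hand in the paper: the inequality ${\mathcal G}(f,g)\le{\mathcal A}(f,g)$ is just \eqref{14} taken at $\lambda=1/2$, and the pointwise monotonicity of $\lambda\mapsto{\mathcal K}_\lambda(f,g)$ has been observed in the paragraph preceding the theorem.

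First I would dispatch the two equalities. Starting from definition \eqref{15}, a direct rewriting gives
\begin{equation*}
{\mathcal K}_r(f,g)=(1-r){\mathcal G}(f,g)+r{\mathcal A}(f,g)=r\big({\mathcal A}(f,g)-{\mathcal G}(f,g)\big)+{\mathcal G}(f,g),
\end{equation*}
and symmetrically for ${\mathcal K}_{1-r}(f,g)$. The only point of care, flagged in Remark \ref{rem1}, is that the difference ${\mathcal A}(f,g)-{\mathcal G}(f,g)$ must be handled with the conventions $0\cdot(+\infty)=+\infty$ and $(+\infty)-(+\infty)=+\infty$; one checks by a two-line case split (either both values are finite, or ${\mathcal A}(f,g)(x)=+\infty$) that both sides agree pointwise.

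Next, for the inner inequalities, the key observation is that for each fixed $x\in H$ the scalar map
\begin{equation*}
\lambda\longmapsto{\mathcal K}_\lambda(f,g)(x)=(1-\lambda){\mathcal G}(f,g)(x)+\lambda{\mathcal A}(f,g)(x)
\end{equation*}
is non-decreasing on $[0,1]$, since ${\mathcal G}(f,g)(x)\le{\mathcal A}(f,g)(x)$ by \eqref{14}. Because $r=\min(\lambda,1-\lambda)$ obviously satisfies $r\le\lambda\le 1-r$, the monotonicity immediately yields
\begin{equation*}
{\mathcal K}_r(f,g)(x)\le{\mathcal K}_\lambda(f,g)(x)\le{\mathcal K}_{1-r}(f,g)(x)
\end{equation*}
for every $x$, which is the desired point-wise inequality.

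The only mild obstacle, as before, is the boundary behaviour when ${\mathcal A}(f,g)(x)=+\infty$: here ${\mathcal K}_\lambda(f,g)(x)=+\infty$ for every $\lambda\in(0,1]$ (under the convention $0\cdot(+\infty)=+\infty$ one also gets ${\mathcal K}_0(f,g)(x)=+\infty$ whenever ${\mathcal A}(f,g)(x)=+\infty$), so the inequalities hold trivially at such $x$. Once this is noted, the whole proof reduces to the elementary scalar monotonicity argument and takes only a few lines.
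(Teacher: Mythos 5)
Your argument is correct and coincides with the paper's own proof: both rest on the point-wise monotonicity of $\lambda\longmapsto{\mathcal K}_{\lambda}(f,g)$ (a consequence of ${\mathcal G}(f,g)\leq{\mathcal A}(f,g)$ from \eqref{14}) together with $r_{\lambda}\leq\lambda\leq 1-r_{\lambda}$. Your additional care with the equalities and the $+\infty$ conventions of Remark \ref{rem1} is sound but not a different method.
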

\begin{proof}
Since $\lambda\longmapsto {\mathcal K}_{\lambda}(f,g)$, for fixed $(f,g)\in{\mathcal D}(H)$, is point-wisely increasing in $\lambda \in[0,1]$ and $r_{\lambda}\leq\lambda\leq R_{\lambda}=1-r_{\lambda}$ we then immediately deduce the desired inequalities.
\end{proof}

The operator version of Theorem \ref{th1} reads as follows.

\begin{corollary}\label{cor1}
The following operator inequalities
\begin{equation}\label{21}
r_{\lambda}\Big(A\nabla B-A\sharp B\Big)+A\sharp B\leq K_{\lambda}(A,B)\leq A\nabla B-r_{\lambda}\Big(A\nabla B-A\sharp B\Big)
\end{equation}
hold for any $A,B\in{\mathcal B}^{+*}(H)$ and $\lambda\in[0,1]$.
\end{corollary}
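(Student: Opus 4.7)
The plan is to deduce Corollary \ref{cor1} directly from Theorem \ref{th1} by specializing the functional inequalities to the convex quadratic forms $f_A$ and $f_B$ associated with $A,B\in\mathcal{B}^{+*}(H)$. Since $f_A(x)=(1/2)\langle Ax,x\rangle$ and $f_B(x)=(1/2)\langle Bx,x\rangle$ are continuous convex functions defined on all of $H$, they belong to $\Gamma_0(H)$, are everywhere finite, and have $\mathrm{dom}\,f_A\cap\mathrm{dom}\,f_B=H\neq\emptyset$, so $(f_A,f_B)\in\mathcal{D}(H)$ and Theorem \ref{th1} applies.

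Applying Theorem \ref{th1} to the pair $(f_A,f_B)$ gives the pointwise chain of functional inequalities
$$\mathcal{K}_{r_\lambda}(f_A,f_B)\leq\mathcal{K}_\lambda(f_A,f_B)\leq\mathcal{K}_{1-r_\lambda}(f_A,f_B).$$
I would then invoke the transfer identity in \eqref{17}, namely $\mathcal{K}_\mu(f_A,f_B)=f_{K_\mu(A,B)}$ for every $\mu\in[0,1]$ (itself a consequence of \eqref{145} combined with the definitions \eqref{7a} and \eqref{15}), so that the previous chain becomes
$$f_{K_{r_\lambda}(A,B)}\leq f_{K_\lambda(A,B)}\leq f_{K_{1-r_\lambda}(A,B)}$$
holding pointwise on $H$.

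The last step is the standard dictionary between the pointwise order on quadratic forms and the L\"owner order on their associated self-adjoint operators: $f_P\leq f_Q$ on $H$ if and only if $\langle Px,x\rangle\leq\langle Qx,x\rangle$ for all $x\in H$, which (for self-adjoint $P,Q$) is precisely $P\leq Q$. Applying this to both halves of the chain and rewriting
$$K_{r_\lambda}(A,B)=A\sharp B+r_\lambda\bigl(A\nabla B-A\sharp B\bigr),\qquad K_{1-r_\lambda}(A,B)=A\nabla B-r_\lambda\bigl(A\nabla B-A\sharp B\bigr),$$
yields exactly \eqref{21}.

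There is no real obstacle to this argument: once Theorem \ref{th1} is established, the corollary is a mechanical specialization. The only point that deserves a moment's attention, in view of the warning in Remark \ref{rem1} about the functional equalities $f-f=0$ and $f-g=-(g-f)$ failing for functionals taking the value $+\infty$, is that here all involved functionals are everywhere finite quadratic forms, so such pathologies do not arise and the linear operations on the $f_C$'s translate faithfully into the corresponding linear operations on the operators $C$ via the injective linear map $C\mapsto f_C$.
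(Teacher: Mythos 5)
Your proof is correct and follows exactly the route the paper takes: specialize Theorem \ref{th1} to $f=f_A$, $g=f_B$, transfer via \eqref{145} and \eqref{17}, and read off the L\"owner order from the pointwise order on quadratic forms. The paper simply declares this ``immediate'' and omits the details; you have supplied them accurately, including the (harmless here) finiteness check that sidesteps the cautions of Remark \ref{rem1}.
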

\begin{proof}
It is immediate from \eqref{20} when we take $f=f_A$ and $g=f_B$ and we use \eqref{145} and \eqref{17}. Details are simple and therefore omitted here.
\end{proof}

\begin{remark}
(i) It is clear that \eqref{21} refines the double inequality \eqref{8a}. Further, \eqref{21} also refines \eqref{9} since $\lambda(1-\lambda)\leq r_{\lambda}:=\min(\lambda,1-\lambda)$ for any $\lambda\in[0,1]$. Note that \eqref{9} has been proved in \cite{YR} via the techniques of functional calculus.\\
(ii) The operator inequality \eqref{21} has been immediately deduced from \eqref{20} which, in its turn, refines \eqref{18}. Moreover, \eqref{20} has a simple proof and immediately implies \eqref{21} without the need to the techniques of functional calculus.
\end{remark}

Before stating our second main result we need the following lemma.

\begin{lemma}\label{lem1}
Let $(f,g)\in{\mathcal D}(H)$ and $\lambda\in(0,1)$. Then we have
\begin{equation}\label{22}
{\mathcal A}_{\lambda}(f,g)-{\mathcal G}_{\lambda}(f,g)=\frac{\sin(\pi\lambda)}{\pi}\int_0^1\frac{1}{t\sharp_{\lambda}(1-t)}\Big({\mathcal A}_t(f,g)-{\mathcal H}_t(f,g)\Big)dt.
\end{equation}
In particular, one has
\begin{equation}\label{225}
{\mathcal A}(f,g)-{\mathcal G}(f,g)=\frac{1}{\pi}\int_0^1\frac{1}{\sqrt{t(1-t)}}\Big({\mathcal A}_t(f,g)-{\mathcal H}_t(f,g)\Big)dt.
\end{equation}
\end{lemma}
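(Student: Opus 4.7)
The plan is to derive the identity by producing an integral representation for $\mathcal{A}_\lambda(f,g)$ that has the \emph{same} kernel $\frac{1}{t\sharp_\lambda(1-t)}$ as the one already recorded for $\mathcal{G}_\lambda(f,g)$ in \eqref{12}, and then subtract the two termwise.

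First, I would state and prove the auxiliary identity
\begin{equation*}
\mathcal{A}_\lambda(f,g)=\frac{\sin(\pi\lambda)}{\pi}\int_0^1\frac{1}{t\sharp_\lambda(1-t)}\mathcal{A}_t(f,g)\,dt. \tag{$\ast$}
\end{equation*}
Since $\frac{1}{t\sharp_\lambda(1-t)}=t^{\lambda-1}(1-t)^{-\lambda}$ and $\mathcal{A}_t(f,g)(x)=(1-t)f(x)+tg(x)$, pulling $f(x),g(x)$ out of the integral reduces $(\ast)$, pointwise at a finite point $x$, to evaluating the two Beta integrals $\int_0^1 t^{\lambda-1}(1-t)^{1-\lambda}\,dt = B(\lambda,2-\lambda)$ and $\int_0^1 t^{\lambda}(1-t)^{-\lambda}\,dt = B(\lambda+1,1-\lambda)$. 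Using the reflection formula $\Gamma(\lambda)\Gamma(1-\lambda)=\pi/\sin(\pi\lambda)$, these evaluate respectively to $(1-\lambda)\pi/\sin(\pi\lambda)$ and $\lambda\pi/\sin(\pi\lambda)$, so the right side of $(\ast)$ collapses to $(1-\lambda)f(x)+\lambda g(x)=\mathcal{A}_\lambda(f,g)(x)$. For points $x$ where $f(x)=+\infty$ or $g(x)=+\infty$, both sides of $(\ast)$ equal $+\infty$ under the convention recalled in Remark 2.1, so $(\ast)$ holds as a pointwise functional equality on all of $H$.

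Second, I would combine $(\ast)$ with \eqref{12} by subtracting the integrands. This is the step where one must invoke the warning of Remark 2.1: the order of subtraction matters. Thanks to \eqref{14}, $\mathcal{H}_t(f,g)\leq\mathcal{A}_t(f,g)$ for every $t\in(0,1)$, so the difference $\mathcal{A}_t(f,g)-\mathcal{H}_t(f,g)$ is an honest nonnegative functional (well-defined at every $x$, with value $+\infty$ only when $\mathcal{A}_t$ is), and the same ordering $\mathcal{G}_\lambda(f,g)\leq \mathcal{A}_\lambda(f,g)$ ensures that $\mathcal{A}_\lambda(f,g)-\mathcal{G}_\lambda(f,g)$ on the left of \eqref{22} is itself well-defined pointwise. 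Hence the subtraction is legitimate and yields \eqref{22}. The specialization \eqref{225} then follows instantly, using $\sin(\pi/2)/\pi=1/\pi$ and $t\sharp_{1/2}(1-t)=\sqrt{t(1-t)}$.

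The main obstacle is not the computation itself — the Beta-function identity $(\ast)$ is essentially routine — but the bookkeeping demanded by Remark 2.1: one must write every integral of functionals in a form where the integrand is nonnegative (here by appealing to \eqref{14}) and must carry out the subtraction in the order $\mathcal{A}-\mathcal{G}$ and $\mathcal{A}_t-\mathcal{H}_t$, never the reverse, in order to avoid the $(+\infty)-(+\infty)$ ambiguity. Once this is in place, the lemma is immediate.
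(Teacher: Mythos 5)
Your proposal is correct and follows essentially the same route as the paper: both proofs rest on the Beta-function evaluations $B(\lambda,1-\lambda)$, $B(\lambda,2-\lambda)$ and $B(\lambda+1,1-\lambda)$ together with the reflection formula to represent ${\mathcal A}_{\lambda}(f,g)$ as an integral against the kernel $\frac{\sin(\pi\lambda)}{\pi}\,\frac{t^{\lambda-1}}{(1-t)^{\lambda}}$ and then subtract this from \eqref{12}. Your added bookkeeping about the order of subtraction and the $(+\infty)-(+\infty)$ convention is more explicit than what the paper records here, but it does not change the argument.
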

\begin{proof}
Let $\Gamma$ and $B$ denote the standard special functions Gamma and Beta, respectively. Then we have
$$\frac{\sin(\pi\lambda)}{\pi}\int_0^1\frac{t^{\lambda-1}}{(1-t)^{\lambda}}dt=\frac{\sin(\pi\lambda)}{\pi}B\big(\lambda,1-\lambda\big)
=\frac{\sin(\pi\lambda)}{\pi}\Gamma(\lambda)\Gamma(1-\lambda)=1.$$
This, with the definition of ${\mathcal G}_{\lambda}(f,g)$, yields
$${\mathcal A}_{\lambda}(f,g)-{\mathcal G}_{\lambda}(f,g)=\frac{\sin(\pi\lambda)}{\pi}\int_0^1\frac{t^{\lambda-1}}{(1-t)^{\lambda}}
\Big({\mathcal A}_{\lambda}(f,g)-{\mathcal H}_{t}(f,g)\Big)dt.$$
Now, since ${\mathcal A}_{t}(f,g)=(1-t)f+t g$, with the following
\begin{multline*}
\frac{\sin(\pi\lambda)}{\pi}\int_0^1\frac{t^{\lambda-1}}{(1-t)^{\lambda}}(1-t)dt=\frac{\sin(\pi\lambda)}{\pi}B\big(\lambda,2-\lambda\big)
\\=\frac{\sin(\pi\lambda)}{\pi}\Gamma(\lambda)\Gamma(2-\lambda)
=\frac{\sin(\pi\lambda)}{\pi}(1-\lambda)\Gamma(\lambda)\Gamma(1-\lambda)=1-\lambda
\end{multline*}
and (by similar arguments)
$$\frac{\sin(\pi\lambda)}{\pi}\int_0^1\frac{t^{\lambda-1}}{(1-t)^{\lambda}}t dt=\lambda,$$
we deduce that
\begin{equation*}\label{23}
{\mathcal A}_{\lambda}(f,g)-{\mathcal G}_{\lambda}(f,g)=\frac{\sin(\pi\lambda)}{\pi}\int_0^1\frac{t^{\lambda-1}}{(1-t)^{\lambda}}
\Big({\mathcal A}_{t}(f,g)-{\mathcal H}_{t}(f,g)\Big)dt,
\end{equation*}
which is the desired result.
\end{proof}

\begin{proposition}
Let $(f,g)\in{\mathcal D}(H)$ and $\lambda\in(0,1)$. Then the following equality
\begin{multline}\label{24}
{\mathcal A}(f,g)(x)-{\mathcal{HZ}}_{\lambda}(f,g)(x)\\
=\frac{\sin(\pi\lambda)}{\pi}\int_0^1HZ_{\lambda}\Big(\frac{1}{t},\frac{1}{1-t}\Big)\Big({\mathcal A}_t(f,g)(x)-{\mathcal H}_t(f,g)(x)\Big)dt
\end{multline}
holds for any $x\in H$.
\end{proposition}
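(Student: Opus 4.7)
The plan is to derive the identity by applying Lemma \ref{lem1} twice (once at $\lambda$ and once at $1-\lambda$) and adding the two resulting equalities. Since $\mathcal{HZ}_\lambda(f,g)$ is by definition \eqref{16} the symmetric average of $\mathcal{G}_\lambda(f,g)$ and $\mathcal{G}_{1-\lambda}(f,g)$, while the arithmetic mean satisfies $\mathcal{A}_\lambda(f,g)+\mathcal{A}_{1-\lambda}(f,g)=f+g=2\mathcal{A}(f,g)$, summing the two applications of \eqref{22} will produce $2\big(\mathcal{A}(f,g)-\mathcal{HZ}_\lambda(f,g)\big)$ on the left-hand side. The scalar kernel that appears on the right will combine into exactly the scalar Heinz mean of $1/t$ and $1/(1-t)$.

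Concretely, first I would write \eqref{22} at the parameter $\lambda$. Then, using $\sin(\pi(1-\lambda))=\sin(\pi\lambda)$, I would write \eqref{22} at the parameter $1-\lambda$:
$$\mathcal{A}_{1-\lambda}(f,g)-\mathcal{G}_{1-\lambda}(f,g)=\frac{\sin(\pi\lambda)}{\pi}\int_0^1\frac{1}{t\sharp_{1-\lambda}(1-t)}\Big(\mathcal{A}_t(f,g)-\mathcal{H}_t(f,g)\Big)dt.$$
Adding these two pointwise identities (at $x\in H$) gives
$$2\mathcal{A}(f,g)(x)-2\mathcal{HZ}_\lambda(f,g)(x)=\frac{\sin(\pi\lambda)}{\pi}\int_0^1\left(\frac{1}{t^{1-\lambda}(1-t)^\lambda}+\frac{1}{t^\lambda(1-t)^{1-\lambda}}\right)\Big(\mathcal{A}_t(f,g)(x)-\mathcal{H}_t(f,g)(x)\Big)dt.$$
Next I would use the definition \eqref{3b} of the scalar Heinz mean to recognise
$$\frac{1}{t^{1-\lambda}(1-t)^\lambda}+\frac{1}{t^\lambda(1-t)^{1-\lambda}}=\Big(\tfrac{1}{t}\Big)^{1-\lambda}\Big(\tfrac{1}{1-t}\Big)^{\lambda}+\Big(\tfrac{1}{t}\Big)^{\lambda}\Big(\tfrac{1}{1-t}\Big)^{1-\lambda}=2\,HZ_\lambda\!\left(\tfrac{1}{t},\tfrac{1}{1-t}\right),$$
and dividing by $2$ yields exactly \eqref{24}.

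The only delicate point, as flagged in Remark \ref{rem1}(ii), is the legitimacy of the pointwise subtractions, since $f$ or $g$ can take the value $+\infty$. If $x\in\mathrm{dom}\,f\cap\mathrm{dom}\,g$, then $\mathcal{A}_t(f,g)(x)$ is finite for every $t\in(0,1)$ and the subtractions $\mathcal{A}_t(f,g)(x)-\mathcal{H}_t(f,g)(x)\geq 0$ (by \eqref{14} with $t$ in place of $\lambda$), so Fubini/integrability is not an issue and everything manipulates as in the scalar case. If $x\notin\mathrm{dom}\,f\cap\mathrm{dom}\,g$, then $\mathcal{A}(f,g)(x)=+\infty$ and $\mathcal{A}_t(f,g)(x)=+\infty$ for $t\in(0,1)$, so both sides of \eqref{24} equal $+\infty$ under the convention $0\cdot(+\infty)=+\infty$ and $(+\infty)-(+\infty)=+\infty$. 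This case distinction is the main (though modest) obstacle; the algebraic manipulations themselves are essentially automatic once Lemma \ref{lem1} is in hand.
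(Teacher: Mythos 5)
Your proposal is correct and follows essentially the same route as the paper's own proof: apply Lemma \ref{lem1} at both $\lambda$ and $1-\lambda$, add the two identities using ${\mathcal A}_{\lambda}+{\mathcal A}_{1-\lambda}=2{\mathcal A}$ and the definition \eqref{16} of ${\mathcal{HZ}}_{\lambda}$, and recognise the combined kernel as $2\,HZ_{\lambda}(1/t,1/(1-t))$. Your handling of the case $x\notin{\rm dom}\,f\cap{\rm dom}\,g$ also matches the paper's.
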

\begin{proof}
If $x\notin{\rm dom}\;f\cap{\rm dom}\;g\neq\emptyset$, i.e. $f(x)=+\infty$ or $g(x)=+\infty$, then the two sides of \eqref{24} are infinite and so \eqref{24} holds. Assume that $x\in{\rm dom}\;f\cap{\rm dom}\;g\neq\emptyset$. First, it is easy to check that
\begin{equation}\label{25}
{\mathcal A}_{\lambda}(f,g)(x)+{\mathcal A}_{1-\lambda}(f,g)(x)=(f+g)(x):=2 {\mathcal A}(f,g)(x).
\end{equation}
Otherwise, from \eqref{22} we can write
\begin{multline}\label{26}
{\mathcal A}_{1-\lambda}(f,g)(x)-{\mathcal G}_{1-\lambda}(f,g)(x)\\
=\frac{\sin(\pi\lambda)}{\pi}\int_0^1\frac{1}{t\sharp_{1-\lambda}(1-t)}\Big({\mathcal A}_t(f,g)(x)-{\mathcal H}_t(f,g)(x)\Big)dt.
\end{multline}
Adding side to side \eqref{22} and \eqref{26}, with the help of \eqref{16} and \eqref{25}, we deduce
\begin{multline*}
{\mathcal A}(f,g)(x)-{\mathcal{HZ}}_{\lambda}(f,g)(x)\\
=\frac{\sin(\pi\lambda)}{2\pi}\int_0^1\Big(\frac{1}{t\sharp_{\lambda}(1-t)}+
\frac{1}{t\sharp_{1-\lambda}(1-t)}\Big)\Big({\mathcal A}_t(f,g)(x)-{\mathcal H}_t(f,g)(x)\Big)dt,
\end{multline*}
from which the desired result follows after a simple manipulation.
\end{proof}

\begin{remark}
For the sake of clearness for the reader, we mention that according to \eqref{145} and \eqref{17} we immediately deduce that the operator versions of \eqref{22}, \eqref{225} and \eqref{24} are, respectively, given by
$$A\nabla_{\lambda}B-A\sharp_{\lambda}B=\frac{\sin(\pi\lambda)}{\pi}\int_0^1\frac{1}{t\sharp_{\lambda}(1-t)}\Big(A\nabla_tB-A!_tB\Big)dt,$$
$$A\nabla B-A\sharp B=\frac{1}{\pi}\int_0^1\frac{1}{\sqrt{t(1-t)}}\Big(A\nabla_tB-A!_tB\Big)dt,$$
$$A\nabla B-HZ_{\lambda}(A,B)=\frac{\sin(\pi\lambda)}{\pi}\int_0^1HZ_{\lambda}\Big(\frac{1}{t},\frac{1}{1-t}\Big)\Big(A\nabla_tB-A!_tB\Big)dt.$$
\end{remark}

We now are in a position to state our second main result as recited in the following.

\begin{theorem}\label{th3}
Let $(f,g)\in{\mathcal D}(H)$ and $\lambda\in(0,1)$. Then the following inequalities hold:
\begin{equation}\label{27}
{\mathcal{HZ}}_{\lambda}(f,g)
\leq{\mathcal K}_{\theta(\lambda)}(f,g)\leq{\mathcal A}(f,g),
\end{equation}
where we set $\theta(\lambda)=1-\sin(\pi\lambda)$ for any $\lambda\in(0,1)$.
\end{theorem}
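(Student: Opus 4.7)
My plan is to split the double inequality \eqref{27} into its two halves. The upper estimate $\mathcal{K}_{\theta(\lambda)}(f,g)\leq\mathcal{A}(f,g)$ is essentially free: because $\theta(\lambda)=1-\sin(\pi\lambda)\in[0,1]$ for every $\lambda\in(0,1)$, it is nothing but the right-hand part of \eqref{18}.

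The content lies in the lower estimate $\mathcal{HZ}_{\lambda}(f,g)\leq \mathcal{K}_{\theta(\lambda)}(f,g)$. My strategy is to rewrite $\mathcal{K}_{\theta(\lambda)}(f,g)-\mathcal{HZ}_{\lambda}(f,g)$ as a single integral whose integrand is manifestly nonnegative. Unfolding \eqref{15} with $1-\theta(\lambda)=\sin(\pi\lambda)$, this difference equals
$$\bigl(\mathcal{A}(f,g)-\mathcal{HZ}_{\lambda}(f,g)\bigr)-\sin(\pi\lambda)\bigl(\mathcal{A}(f,g)-\mathcal{G}(f,g)\bigr).$$
For $x\in\mathrm{dom}\,f\cap\mathrm{dom}\,g$ all terms are finite, so I can plug in formula \eqref{24} for the first parenthesis and \eqref{225} for the second, and factor $\sin(\pi\lambda)/\pi$ out. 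The result is
$$\frac{\sin(\pi\lambda)}{\pi}\int_0^1\left[HZ_{\lambda}\Big(\tfrac{1}{t},\tfrac{1}{1-t}\Big)-\frac{1}{\sqrt{t(1-t)}}\right]\bigl(\mathcal{A}_t(f,g)(x)-\mathcal{H}_t(f,g)(x)\bigr)\,dt.$$
Nonnegativity of the second factor of the integrand follows from \eqref{14}. Nonnegativity of the bracketed factor is exactly the scalar Heinz inequality \eqref{4b} applied with $a=1/t$, $b=1/(1-t)$, since $\sqrt{ab}=1/\sqrt{t(1-t)}$. Hence the whole integral is $\geq 0$ on the common domain of $f$ and $g$.

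The residual issue, flagged by Remark 2.3(ii), is the case $x\notin\mathrm{dom}\,f\cap\mathrm{dom}\,g$: there $\mathcal{A}(f,g)(x)=+\infty$ and the displayed subtraction $(+\infty)-(+\infty)$ is not a legitimate manipulation. I would handle this directly at the level of the order statement $\mathcal{K}_{\theta(\lambda)}(f,g)(x)-\mathcal{HZ}_{\lambda}(f,g)(x)\geq 0$ as endorsed by Remark 2.3(ii): when $\lambda\neq 1/2$ we have $\theta(\lambda)>0$, so the definition \eqref{15} already forces $\mathcal{K}_{\theta(\lambda)}(f,g)(x)=+\infty$ and the inequality is trivial; the remaining case $\lambda=1/2$ satisfies $\theta(1/2)=0$, hence $\mathcal{K}_0=\mathcal{G}=\mathcal{HZ}_{1/2}$ and equality holds throughout. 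This infinite-value bookkeeping is shallow but must be stated explicitly, and I expect it to be the step demanding the most care in the write-up; the analytic heart of the argument is the scalar comparison $HZ_{\lambda}(a,b)\geq\sqrt{ab}$ under the integral sign.
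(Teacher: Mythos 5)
Your proposal is correct and takes essentially the same route as the paper: both arguments combine the integral representation \eqref{24} with the scalar Heinz bound $HZ_{\lambda}(1/t,1/(1-t))\geq 1/\sqrt{t(1-t)}$ from \eqref{4b}, the positivity of ${\mathcal A}_t(f,g)-{\mathcal H}_t(f,g)$ from \eqref{14}, and the identity \eqref{225}, the only cosmetic difference being that the paper first isolates the intermediate inequality \eqref{28} rather than merging everything into a single integral. The infinite-value bookkeeping is also the same in substance (the paper splits on whether ${\mathcal K}_{\theta(\lambda)}(f,g)(x)$ is finite, which subsumes your separate $\lambda=1/2$ discussion and avoids relying on the convention $0\cdot(+\infty)=+\infty$ there).
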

\begin{proof}
First, the right inequality of \eqref{27} follows from the right inequality in \eqref{18}. Note that, by \eqref{14}, we have ${\mathcal A}_t(f,g)-{\mathcal H}_t(f,g)\geq0$ for any $(f,g)\in{\mathcal D}(H)$ and $t\in(0,1)$. Now, by \eqref{24} with the left inequality of \eqref{4b}, we can write
$${\mathcal A}(f,g)-{\mathcal{HZ}}_{\lambda}(f,g)
\geq\frac{\sin(\pi\lambda)}{\pi}\int_0^1\frac{1}{\sqrt{t(1-t)}}\Big({\mathcal A}_t(f,g)-{\mathcal H}_t(f,g)\Big)dt.$$
Thanks to \eqref{225} we then obtain
\begin{equation}\label{28}
{\mathcal A}(f,g)-{\mathcal{HZ}}_{\lambda}(f,g)\geq\big(\sin(\pi\lambda)\big)\Big({\mathcal A}(f,g)-{\mathcal G}(f,g)\Big).
\end{equation}
To finish the proof, we have to take into account some precautions (see Remark \ref{rem1}). At $x\in H$ such that ${\mathcal K}_{\theta(\lambda)}(f,g)(x)=+\infty$, the left inequality of \eqref{27} is obviously satisfied at $x$. Now, let $x\in H$ be such that ${\mathcal K}_{\theta(\lambda)}(f,g)(x)<+\infty$. By the definition of the Heron functional mean \eqref{15} we deduce that ${\mathcal A}(f,g)(x)<+\infty$ and ${\mathcal G}(f,g)(x)<+\infty$. With this, \eqref{28} yields
$${\mathcal K}_{\theta(\lambda)}(f,g)(x)-{\mathcal{HZ}}_{\lambda}(f,g)(x)\geq0,$$
which, with Remark \ref{rem1} again, means that the left inequality of \eqref{27} is also satisfied at $x$. The proof is completed.
\end{proof}

To give more main results in the sequel, we need some lemmas. The first is recited in the following.

\begin{lemma} \cite{R}. Let $(f,g)\in{\mathcal D}(H)$ and $\lambda\in(0,1)$. Then we have
\begin{multline}\label{30}
2r_{\lambda}\Big({\mathcal A}(f,g)-{\mathcal H}(f,g)\Big)\leq {\mathcal A}_{\lambda}(f,g)-{\mathcal H}_{\lambda}(f,g)\\
\leq 2(1-r_{\lambda})\Big({\mathcal A}(f,g)-{\mathcal H}(f,g)\Big),
\end{multline}
where, as already pointed before, $r_{\lambda}:=\min(\lambda,1-\lambda)$.
\end{lemma}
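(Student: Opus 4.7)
The plan is to reduce \eqref{30} to a scalar concavity argument. Fixing $x \in H$, I would study the one-variable function
$$\varphi_x(\lambda) := {\mathcal A}_{\lambda}(f,g)(x) - {\mathcal H}_{\lambda}(f,g)(x)$$
on $[0,1]$. Once concavity of $\varphi_x$ together with $\varphi_x(0) = \varphi_x(1) = 0$ is established, both inequalities in \eqref{30} will drop out of elementary convex combinations involving the midpoint $\tfrac{1}{2}$.

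First I would verify that $\varphi_x$ is concave in $\lambda$. The arithmetic part ${\mathcal A}_{\lambda}(f,g)(x) = (1-\lambda)f(x) + \lambda g(x)$ is affine. For the harmonic part, combining \eqref{125} with the definition \eqref{13} of the Fenchel conjugate gives
$${\mathcal H}_{\lambda}(f,g)(x) = \sup_{y^* \in H}\Big\{\Re e\langle x, y^*\rangle - (1-\lambda)f^*(y^*) - \lambda g^*(y^*)\Big\},$$
which is the supremum over $y^*$ of functions affine in $\lambda$, hence convex in $\lambda$. Therefore $\varphi_x$ is the sum of an affine and a concave function, so concave. The convention \eqref{eq133} furthermore yields $\varphi_x(0) = \varphi_x(1) = 0$ whenever $x \in {\rm dom}\,f \cap {\rm dom}\,g$.

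Next, for $\lambda \in (0,\tfrac{1}{2}]$ (the case $\lambda \in [\tfrac{1}{2},1)$ being handled symmetrically, e.g.\ via \eqref{135}), the lower bound follows from the convex decomposition $\lambda = (1-2\lambda)\cdot 0 + 2\lambda \cdot \tfrac{1}{2}$ and concavity:
$$\varphi_x(\lambda) \geq (1-2\lambda)\,\varphi_x(0) + 2\lambda\,\varphi_x(\tfrac{1}{2}) = 2 r_{\lambda}\,\varphi_x(\tfrac{1}{2}).$$
For the upper bound, I would express the midpoint via the convex decomposition $\tfrac{1}{2} = \tfrac{1}{2(1-\lambda)}\,\lambda + \tfrac{1-2\lambda}{2(1-\lambda)}\cdot 1$, from which concavity and $\varphi_x(1) = 0$ yield $\varphi_x(\tfrac{1}{2}) \geq \tfrac{1}{2(1-\lambda)}\,\varphi_x(\lambda)$. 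Rearranging gives $\varphi_x(\lambda) \leq 2(1-\lambda)\,\varphi_x(\tfrac{1}{2}) = 2(1-r_{\lambda})\,\varphi_x(\tfrac{1}{2})$. Reading these inequalities pointwise in $x$ yields \eqref{30}.

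The main obstacle will be the careful treatment of $x \notin {\rm dom}\,f \cap {\rm dom}\,g$, where the differences defining $\varphi_x$ may involve $(+\infty) - (+\infty) = +\infty$, as flagged in Remark \ref{rem1}. At such points ${\mathcal A}_{\lambda}(f,g)(x) = +\infty$ for every $\lambda \in (0,1)$, in particular for $\lambda = 1/2$, so the upper bound in \eqref{30} is trivially satisfied, while the lower bound is safer to read in the equivalent form ${\mathcal A}_{\lambda}(f,g) \geq {\mathcal H}_{\lambda}(f,g) + 2 r_{\lambda}\big({\mathcal A}(f,g) - {\mathcal H}(f,g)\big)$, which again holds at $x$ with $+\infty$ on the left-hand side. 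Thus the pointwise scalar argument extends to all $x \in H$, completing the proof of \eqref{30}.
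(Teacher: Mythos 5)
Your argument is correct; note first that the paper does not actually prove this lemma --- it is imported verbatim from \cite{R}, so there is no in-text proof to compare against. Your derivation is a legitimate self-contained one, and its key ingredients are precisely those the paper establishes later (for another purpose) in Lemma \ref{lem7}: the map $t\longmapsto{\mathcal H}_t(f,g)(x)$ is convex as a supremum of affine functions of $t$, hence $\varphi_x(t):={\mathcal A}_t(f,g)(x)-{\mathcal H}_t(f,g)(x)$ is concave, nonnegative on $(0,1)$ by \eqref{14}, and vanishes at $t=0,1$ by the convention \eqref{eq133}. The two convex decompositions you use, $\lambda=(1-2\lambda)\cdot 0+2\lambda\cdot\tfrac{1}{2}$ and $\tfrac{1}{2}=\tfrac{1}{2(1-\lambda)}\,\lambda+\tfrac{1-2\lambda}{2(1-\lambda)}\cdot 1$ for $\lambda\in(0,\tfrac12]$ (the other half via \eqref{135}), are the standard ``concave and vanishing at the endpoints'' device behind the scalar inequality \eqref{2a}, and the coefficients check out. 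Two small points deserve an explicit sentence, though neither is a gap: (a) the endpoint values $\varphi_x(0)=\varphi_x(1)=0$ come from the convention \eqref{eq133} rather than from the conjugate formula \eqref{125} (cf.\ Remark \ref{rem1}(i)), so to use chords through the endpoints you should observe that $\varphi_x\geq 0$ on $(0,1)$ forces $\liminf_{\lambda\to 0^+}\varphi_x(\lambda)\geq 0=\varphi_x(0)$ (and likewise at $1$), whence both estimates survive the passage to the limit in $\epsilon$; (b) your treatment of $x\notin{\rm dom}\,f\cap{\rm dom}\,g$ is sound because for $\lambda\in(0,1)$ both $2r_\lambda$ and $2(1-r_\lambda)$ are strictly positive, so all three members of \eqref{30} equal $+\infty$ there under the conventions of Remark \ref{rem1}.
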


We also need the following lemma that concerns a refinement of the so-called Hermite-Hadamard inequality, see \cite{DR,FA} for instance.

\begin{lemma}\label{lem4}
Let $a,b$ with $a<b$ and $\Phi:[a,b]\longrightarrow{\mathbb R}$ be a convex function. Then, for all $p\in[0,1]$ we have
\begin{equation}\label{301}
\Phi\Big(\frac{a+b}{2}\Big)\leq m(p)\leq\frac{1}{b-a}\int_a^b\Phi(t)dt\leq M(p)\leq\frac{\Phi(a)+\Phi(b)}{2},
\end{equation}
where
$$m(p):=p\Phi\Big(\frac{pb+(2-p)a}{2}\Big)+(1-p)\Phi\Big(\frac{(1+p)b+(1-p)a}{2}\Big)$$
and
$$M(p):=\frac{1}{2}\Big(\Phi\big(pb+(1-p)a\big)+p\Phi(a)+(1-p)\Phi(b)\Big).$$
If $\Phi$ is concave then the inequalities \eqref{301} are reversed.
\end{lemma}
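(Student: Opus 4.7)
The plan is to prove Lemma \ref{lem4} by partitioning $[a,b]$ at the point $c=pb+(1-p)a$ and applying the classical Hermite--Hadamard inequality on each of the two subintervals $[a,c]$ and $[c,b]$. Note that $c-a=p(b-a)$ and $b-c=(1-p)(b-a)$, so the midpoints of the two subintervals are exactly
$$\frac{a+c}{2}=\frac{(2-p)a+pb}{2},\qquad \frac{c+b}{2}=\frac{(1-p)a+(1+p)b}{2},$$
which are the two points appearing in the definition of $m(p)$. This choice of $c$ is the key observation; once it is made, the rest will be a short rearrangement.

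For the two middle inequalities, I would apply Hermite--Hadamard on $[a,c]$ and multiply by $p=(c-a)/(b-a)$, and apply Hermite--Hadamard on $[c,b]$ and multiply by $1-p=(b-c)/(b-a)$, then add. On the left this produces $m(p)$ directly, in the middle it produces $\frac{1}{b-a}\int_a^b\Phi(t)\,dt$ by additivity of the integral, and on the right it produces
$$\frac{p\Phi(a)+p\Phi(c)+(1-p)\Phi(c)+(1-p)\Phi(b)}{2}=\frac{\Phi(c)+p\Phi(a)+(1-p)\Phi(b)}{2}=M(p).$$
This immediately gives $m(p)\leq\frac{1}{b-a}\int_a^b\Phi(t)\,dt\leq M(p)$.

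For the outermost inequalities I would just invoke convexity of $\Phi$. The leftmost inequality $\Phi\!\left(\frac{a+b}{2}\right)\leq m(p)$ follows because the $p$-weighted convex combination of the two subinterval midpoints is
$$p\cdot\frac{(2-p)a+pb}{2}+(1-p)\cdot\frac{(1-p)a+(1+p)b}{2}=\frac{a+b}{2},$$
so Jensen's inequality applied to $\Phi$ at these two points with weights $p,1-p$ yields the claim. For the rightmost inequality $M(p)\leq\frac{\Phi(a)+\Phi(b)}{2}$, convexity gives $\Phi(c)=\Phi(pb+(1-p)a)\leq p\Phi(b)+(1-p)\Phi(a)$; plugging this into $M(p)$ collapses the expression to $\frac{\Phi(a)+\Phi(b)}{2}$. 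The concave case is obtained by applying the above to $-\Phi$, which reverses all four inequalities simultaneously.

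I do not expect a serious obstacle: the only subtlety is bookkeeping, namely verifying that the convex combination of the two submidpoints really is $(a+b)/2$ and that the weighted sum of Hermite--Hadamard inequalities assembles correctly into $m(p)$ and $M(p)$ as written. Both are short algebraic checks.
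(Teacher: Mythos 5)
Your proof is correct. The paper does not prove this lemma at all -- it is quoted as known, with a pointer to \cite{DR,FA} -- and your argument (split $[a,b]$ at $c=pb+(1-p)a$, apply the classical Hermite--Hadamard inequality on each piece with weights $p$ and $1-p$, then use two-point convexity for the outer bounds) is precisely the standard proof given in \cite{FA}; all the algebraic identifications you flag do check out, and the only cosmetic point is to treat $p=0$ and $p=1$ separately, where the statement degenerates to the classical inequality.
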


\begin{lemma}\label{lem5}
For $\lambda\in(0,1)$ fixed, let $\Psi_{\lambda}$ be the function defined by
$$\Psi_{\lambda}(t)=\Big(\frac{t}{1-t}\Big)^{\lambda},\;\; t\in[0,1/2].$$
Then $\Psi_{\lambda}$ is concave on $[0,\frac{1-\lambda}{2}]$ and convex on $[\frac{1-\lambda}{2},1/2]$.
\end{lemma}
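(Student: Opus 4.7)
The plan is purely computational: differentiate $\Psi_\lambda$ twice, factor out a manifestly positive quantity, and read off the sign of the leftover linear factor. Writing $\Psi_\lambda(t) = t^\lambda(1-t)^{-\lambda}$ for $t \in (0, 1/2]$, I would first compute
\[
\Psi_\lambda'(t) = \lambda\, t^{\lambda-1}(1-t)^{-\lambda-1},
\]
which is immediate from the product/chain rule after noting that the derivative of $t/(1-t)$ equals $(1-t)^{-2}$.

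Next, differentiating once more and factoring $\lambda\, t^{\lambda-2}(1-t)^{-\lambda-2}$ (which is strictly positive on $(0,1)$), the natural simplification yields
\[
\Psi_\lambda''(t) = \lambda\, t^{\lambda-2}(1-t)^{-\lambda-2}\bigl[(\lambda-1)(1-t) + (\lambda+1)t\bigr] = \lambda\, t^{\lambda-2}(1-t)^{-\lambda-2}\bigl(2t - (1-\lambda)\bigr).
\]
From this expression the sign of $\Psi_\lambda''(t)$ coincides with the sign of the affine factor $2t - (1-\lambda)$. Since $\lambda \in (0,1)$, the point $t_0 = (1-\lambda)/2$ lies strictly in $(0, 1/2)$, and one has $\Psi_\lambda''(t) \leq 0$ on $[0, t_0]$ and $\Psi_\lambda''(t) \geq 0$ on $[t_0, 1/2]$, which is exactly the claimed concavity/convexity dichotomy.

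There is no genuine obstacle here; the only thing to be careful about is the algebraic collapse of $(\lambda-1)(1-t) + (\lambda+1)t$ into $2t - (1-\lambda)$, and the boundary value $\Psi_\lambda(0) = 0$, which is handled by a continuity argument (the second derivative formula is valid on the open interval and $\Psi_\lambda$ is continuous on the closed interval, so one-sided concavity/convexity at $t=0$ follows in the standard way).
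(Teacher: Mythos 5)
Your proposal is correct and follows essentially the same route as the paper: compute $\Psi_\lambda''$, factor out the positive quantity $\lambda\,t^{\lambda-2}(1-t)^{-\lambda-2}$, and read the sign from the affine factor $2t+\lambda-1$. The extra remark about handling the endpoint $t=0$ by continuity is a harmless refinement the paper leaves implicit.
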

\begin{proof}
Simple computations lead to, for any $t\in(0,1/2]$,
$$\Psi_{\lambda}^{\prime}(t)=\lambda\frac{t^{\lambda-1}}{(1-t)^{\lambda+1}}$$
and
$$\Psi_{\lambda}^{\prime\prime}(t)=\lambda\frac{t^{\lambda-2}}{(1-t)^{\lambda+2}}(2t+\lambda-1).$$
The desired results follow.
\end{proof}

For the sake of simplicity, we introduce more notation. For $(f,g)\in{\mathcal D}(H)$ and $\lambda\in[0,1]$ we set
\begin{equation}\label{305}
{\mathcal L}_{\lambda}(f,g):=(1-\lambda){\mathcal H}(f,g)+\lambda{\mathcal A}(f,g).
\end{equation}
It is not hard to see that the following inequalities
\begin{equation}\label{307}
{\mathcal H}(f,g)\leq{\mathcal L}_{\lambda}(f,g)\leq{\mathcal K}_{\lambda}(f,g)\leq{\mathcal A}(f,g)
\end{equation}
hold for any $(f,g)\in{\mathcal D}(H)$ and $\lambda\in[0,1]$. With this, another main result is recited in what follows.

\begin{theorem}\label{th2}
Let $(f,g)\in{\mathcal D}(H)$ and $\lambda\in[0,1]$. Then, for any $p\in[0,1]$, there hold:
\begin{equation}\label{31}
{\mathcal{HZ}}_{\lambda}(f,g)\leq{\mathcal L}_{\gamma_p(\lambda)}(f,g)\leq{\mathcal K}_{\gamma_p(\lambda)}(f,g)\leq{\mathcal A}(f,g),
\end{equation}
where we set
$$\gamma_p(\lambda)=1-\frac{2\sin(\pi\lambda)}{\pi}\Big((1-\lambda){\mathcal M}_{\lambda}(p)+\lambda{\mathcal M}_{1-\lambda}(p)\Big),$$
with
$${\mathcal M}_{\lambda}(p):=\frac{M_{\lambda}(p)+m_{1-\lambda}(p)}{2},$$
$$M_{\lambda}(p):=\frac{1}{2}\left(\Big(\frac{p(1-\lambda)}{2-p(1-\lambda)}\Big)^{\lambda}+(1-p)\Big(\frac{1-\lambda}{1+\lambda}\Big)^{\lambda}\right),$$
and
$$m_{\lambda}(p):=p\left(\frac{2-2\lambda+p\lambda}{2+2\lambda-p\lambda}\right)^{\lambda}+(1-p)
\left(\frac{2-\lambda+p\lambda}{2+\lambda-p\lambda}\right)^{\lambda}.$$
\end{theorem}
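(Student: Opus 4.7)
The plan is to follow the strategy used in the proof of Theorem \ref{th3}, now sharpening its crude single lower bound on $\mathcal{A}_t-\mathcal{H}_t$ by a Hermite--Hadamard refinement coming from Lemmas \ref{lem4} and \ref{lem5}. The last two inequalities in \eqref{31} are contained in \eqref{307} applied with parameter $\gamma_p(\lambda)$, so they only require $\gamma_p(\lambda)\in[0,1]$: the bound $\gamma_p(\lambda)\leq 1$ is immediate since every factor of the subtracted term is nonnegative, while $\gamma_p(\lambda)\geq 0$ follows from $\frac{2\sin(\pi\lambda)}{\pi}\leq\frac{2}{\pi}<1$ combined with the crude estimate $(1-\lambda)\mathcal{M}_\lambda(p)+\lambda\mathcal{M}_{1-\lambda}(p)\leq 1$, itself a consequence of $\Psi_\mu(t)\leq 1$ on $[0,1/2]$ together with the fact that the Hermite--Hadamard quantities are bounded by the corresponding integral.

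The substance is therefore the first inequality, which I would rephrase as
\[
\mathcal{A}(f,g)-\mathcal{HZ}_\lambda(f,g)\;\geq\;\bigl(1-\gamma_p(\lambda)\bigr)\bigl(\mathcal{A}(f,g)-\mathcal{H}(f,g)\bigr).
\]
To prove it, I would start from the identity \eqref{24} and substitute the left inequality of \eqref{30} at parameter $t$, namely $\mathcal{A}_t-\mathcal{H}_t\geq 2r_t(\mathcal{A}-\mathcal{H})$. A key observation is that both the kernel $HZ_\lambda(1/t,1/(1-t))=\tfrac{1}{2}\bigl[\tfrac{1}{t^{1-\lambda}(1-t)^{\lambda}}+\tfrac{1}{t^{\lambda}(1-t)^{1-\lambda}}\bigr]$ and the weight $r_t$ are invariant under $t\leftrightarrow 1-t$, so the $[0,1]$-integral doubles the $[0,1/2]$-integral; on this sub-interval $r_t=t$, and the two products simplify to $\Psi_\lambda(t)$ and $\Psi_{1-\lambda}(t)$ respectively. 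This cleanly reduces the problem to bounding below $\int_0^{1/2}\bigl[\Psi_\lambda(t)+\Psi_{1-\lambda}(t)\bigr]\,dt$, with the overall prefactor becoming $\frac{2\sin(\pi\lambda)}{\pi}(\mathcal{A}-\mathcal{H})$.

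Next I would apply Lemma \ref{lem5} to split each integrand at its inflection point: $\Psi_\lambda$ is concave on $[0,(1-\lambda)/2]$ and convex on $[(1-\lambda)/2,1/2]$, while $\Psi_{1-\lambda}$ is concave on $[0,\lambda/2]$ and convex on $[\lambda/2,1/2]$. On each of the four resulting sub-intervals Lemma \ref{lem4} supplies a lower bound (the $M(p)$ entry of \eqref{301} on concave pieces, the $m(p)$ entry on convex pieces). A direct evaluation of $\Psi_\mu$ at the four midpoints prescribed by Lemma \ref{lem4} should show that these bounds are $\tfrac{1-\lambda}{2}M_\lambda(p)$, $\tfrac{\lambda}{2}m_\lambda(p)$, $\tfrac{\lambda}{2}M_{1-\lambda}(p)$, $\tfrac{1-\lambda}{2}m_{1-\lambda}(p)$, with $M_\mu(p)$ and $m_\mu(p)$ exactly as defined in the theorem. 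Summing these four pieces and regrouping through $\mathcal{M}_\lambda(p)=(M_\lambda(p)+m_{1-\lambda}(p))/2$ and $\mathcal{M}_{1-\lambda}(p)=(M_{1-\lambda}(p)+m_\lambda(p))/2$ produces the coefficient $(1-\lambda)\mathcal{M}_\lambda(p)+\lambda\mathcal{M}_{1-\lambda}(p)$, and multiplication by $\tfrac{2\sin(\pi\lambda)}{\pi}$ returns exactly $1-\gamma_p(\lambda)$.

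The main obstacle is the bookkeeping in the previous paragraph: the definitions of $M_\lambda(p)$ and $m_\lambda(p)$ look asymmetric in $\lambda$ and in $p$, and the cross-pairing $\lambda\leftrightarrow 1-\lambda$ inside $\mathcal{M}_\lambda(p)$ means that the $M$-constant coming from $\Psi_\lambda$ on $[0,(1-\lambda)/2]$ must be packaged with the $m$-constant coming from $\Psi_{1-\lambda}$ on $[\lambda/2,1/2]$ in order to reassemble into $\mathcal{M}_\lambda(p)$; verifying this match requires a careful but routine evaluation of $\Psi_\mu$ at each required midpoint. A secondary technicality, already encountered in the proof of Theorem \ref{th3}, is that at points $x\in H$ outside $\mathrm{dom}\,f\cap\mathrm{dom}\,g$ one must argue point-wise and invoke Remark \ref{rem1} to avoid manipulations of the form $+\infty-(+\infty)$; this is handled verbatim as there.
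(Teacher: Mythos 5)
Your proposal is correct and follows essentially the same route as the paper's proof: the identity \eqref{24} combined with the left inequality of \eqref{30}, the symmetry reduction of $\int_0^1 r_t\,HZ_{\lambda}(1/t,1/(1-t))\,dt$ to $\int_0^{1/2}\big(\Psi_{\lambda}(t)+\Psi_{1-\lambda}(t)\big)dt$, and the Hermite--Hadamard refinement of Lemma \ref{lem4} applied on the concavity/convexity sub-intervals given by Lemma \ref{lem5}, reassembling into the constant $1-\gamma_p(\lambda)$, with the same point-wise precautions as in Theorem \ref{th3}. Your explicit check that $\gamma_p(\lambda)\in[0,1]$ (needed to invoke \eqref{307}) is a detail the paper leaves implicit, and is argued correctly.
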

\begin{proof}
For $\lambda=0$ or $\lambda=1$, it is clear that \eqref{31} are reduced to equalities. Assume that $\lambda\in(0,1)$. By \eqref{24}, with the left inequality of \eqref{30}, we have
\begin{equation}\label{32}
{\mathcal A}(f,g)-{\mathcal{HZ}}_{\lambda}(f,g)\geq\frac{2\sin(\pi\lambda)}{\pi}\int_0^1r_t HZ_{\lambda}\Big(\frac{1}{t},\frac{1}{1-t}\Big)
\Big({\mathcal A}(f,g)-{\mathcal H}(f,g)\Big)dt.
\end{equation}
Since $r_t=\min(t,1-t)$ and the function $t\longmapsto r_t HZ_{\lambda}\Big(\frac{1}{t},\frac{1}{1-t}\Big)$ is symmetric around $1/2$ then one has
\begin{multline}\label{33}
\int_0^1r_t HZ_{\lambda}\Big(\frac{1}{t},\frac{1}{1-t}\Big)dt=2\int_0^{1/2}t HZ_{\lambda}\Big(\frac{1}{t},\frac{1}{1-t}\Big)dt\\
=\int_0^{1/2}\left\{\Big(\frac{t}{1-t}\Big)^{\lambda}+\Big(\frac{t}{1-t}\Big)^{1-\lambda}\right\}dt.
\end{multline}
According to Lemma \ref{lem5}, we write
\begin{equation}\label{34}
\int_0^{1/2}\Big(\frac{t}{1-t}\Big)^{\lambda}dt:=\int_0^{1/2}\Psi_{\lambda}(t)dt=\int_0^{\frac{1-\lambda}{2}}\Psi_{\lambda}(t)dt
+\int_{\frac{1-\lambda}{2}}^{1/2}\Psi_{\lambda}(t)dt.
\end{equation}
By Lemma \ref{lem5} again, with Lemma \ref{lem4}, we have for any $p\in[0,1]$ (after some elementary computations)
$$\int_0^{\frac{1-\lambda}{2}}\Psi_{\lambda}(t)dt\geq\frac{1-\lambda}{2}M_{\lambda}(p),$$
with
\begin{multline}\label{35}
M_{\lambda}(p):=\frac{1}{2}\left(\Psi_{\lambda}\Big(p\frac{1-\lambda}{2}\Big)+(1-p)\Psi_{\lambda}\Big(\frac{1-\lambda}{2}\Big)\right)\\
=\frac{1}{2}\left(\Big(\frac{p(1-\lambda)}{2-p(1-\lambda)}\Big)^{\lambda}+(1-p)\Big(\frac{1-\lambda}{1+\lambda}\Big)^{\lambda}\right)
\end{multline}
and
$$\int_{\frac{1-\lambda}{2}}^{1/2}\Psi_{\lambda}(t)dt\geq\Big(\frac{1}{2}-\frac{1-\lambda}{2}\Big)\;l_{\lambda}(p)=\frac{\lambda}{2}\;m_{\lambda}(p),$$
with
\begin{multline}\label{36}
m_{\lambda}(p):=p\Psi_{\lambda}\left(\frac{p+(2-p)(1-\lambda)}{4}\right)+(1-p)\Psi_{\lambda}\left(\frac{1+p+(1-p)(1-\lambda)}{4}\right)\\
=p\left(\frac{2-2\lambda+p\lambda}{2+2\lambda-p\lambda}\right)^{\lambda}+(1-p)\left(\frac{2-\lambda+p\lambda}{2+\lambda-p\lambda}\right)^{\lambda}.
\end{multline}
With this, \eqref{34} yields
$$\int_0^{1/2}\Big(\frac{t}{1-t}\Big)^{\lambda}dt\geq\frac{1-\lambda}{2}M_{\lambda}(p)+\frac{\lambda}{2}\;m_{\lambda}(p),$$
and so
$$\int_0^{1/2}\Big(\frac{t}{1-t}\Big)^{1-\lambda}dt\geq\frac{\lambda}{2}M_{1-\lambda}(p)+\frac{1-\lambda}{2}\;m_{1-\lambda}(p).$$
These, with \eqref{33}, imply that (after simples manipulations)
$$\int_0^1r_t HZ_{\lambda}\Big(\frac{1}{t},\frac{1}{1-t}\Big)dt\geq(1-\lambda)\left\{\frac{M_{\lambda}(p)+m_{1-\lambda}(p)}{2}\right\}
+\lambda\left\{\frac{M_{1-\lambda}(p)+m_{\lambda}(p)}{2}\right\}$$
which, with \eqref{32}, implies that
$${\mathcal A}(f,g)-{\mathcal{HZ}}_{\lambda}(f,g)\\
\geq\frac{2\sin(\pi\lambda)}{\pi}\Big((1-\lambda){\mathcal M}_{\lambda}(p)+\lambda{\mathcal M}_{1-\lambda}(p)\Big)\Big({\mathcal A}(f,g)-{\mathcal H}(f,g)\Big).$$

With some precautions, as in the proof of Theorem \ref{th3}, we deduce the desired result.
\end{proof}

The operator versions of the previous functional results can be immediately deduced. For instance we have the following result which is the operator version of Theorem \ref{th2}.

\begin{corollary}
Let $A,B\in{\mathcal B}^{+*}(H)$ and $\lambda\in[0,1]$. Then we have
$$HZ_{\lambda}(A,B)\leq L_{\gamma_p(\lambda)}(A,B)\leq K_{\gamma_p(\lambda)}(A,B)\leq A\nabla B,$$
where $\gamma_p(\lambda)$ is the same as in Theorem \ref{th2} and $L_{\lambda}(A,B)=(1-\lambda)A!B+\lambda A\nabla B$ for any $\lambda\in(0,1)$, with $A!B:=A!_{1/2}B$.
\end{corollary}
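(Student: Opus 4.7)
The plan is to deduce this corollary directly from Theorem \ref{th2} by specializing the convex functionals $(f,g)$ to the quadratic forms $(f_A,f_B)$ attached to $A,B\in{\mathcal B}^{+*}(H)$, and then translating each functional mean into its operator counterpart via \eqref{145} and \eqref{17}. This is exactly the pattern already used in Corollary \ref{cor1}, so no new ideas are needed; only the identification of ${\mathcal L}_{\gamma_p(\lambda)}(f_A,f_B)$ requires an extra line of work.

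First I would apply Theorem \ref{th2} with $f=f_A$ and $g=f_B$, which is legitimate because $(f_A,f_B)\in{\mathcal D}(H)$ (both quadratic forms are finite-valued on all of $H$, so their domains trivially intersect). This produces the pointwise functional chain
\[
{\mathcal{HZ}}_{\lambda}(f_A,f_B)\leq{\mathcal L}_{\gamma_p(\lambda)}(f_A,f_B)\leq{\mathcal K}_{\gamma_p(\lambda)}(f_A,f_B)\leq{\mathcal A}(f_A,f_B),
\]
with $\gamma_p(\lambda)$ exactly as in Theorem \ref{th2}.

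Next I would rewrite each term as a quadratic form. By \eqref{145}, ${\mathcal A}(f_A,f_B)=f_{A\nabla B}$ and ${\mathcal H}(f_A,f_B)=f_{A!B}$; by \eqref{17}, ${\mathcal K}_{\gamma_p(\lambda)}(f_A,f_B)=f_{K_{\gamma_p(\lambda)}(A,B)}$ and ${\mathcal{HZ}}_{\lambda}(f_A,f_B)=f_{HZ_{\lambda}(A,B)}$. For the remaining term, the elementary identity $f_{\alpha X+\beta Y}=\alpha f_X+\beta f_Y$ combined with the defining formula \eqref{305} of ${\mathcal L}$ gives
\[
{\mathcal L}_{\gamma_p(\lambda)}(f_A,f_B)=\bigl(1-\gamma_p(\lambda)\bigr)f_{A!B}+\gamma_p(\lambda)\,f_{A\nabla B}=f_{L_{\gamma_p(\lambda)}(A,B)}.
\]

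Finally, since $f_X\leq f_Y$ pointwise on $H$ is equivalent to $Y-X\geq 0$ in the L\"owner order (applied to the four self-adjoint operators $HZ_\lambda(A,B)$, $L_{\gamma_p(\lambda)}(A,B)$, $K_{\gamma_p(\lambda)}(A,B)$ and $A\nabla B$), the functional chain above transfers verbatim to the announced operator inequality. I do not foresee any genuine obstacle: the only routine checkpoint is the linearity computation for ${\mathcal L}_{\gamma_p(\lambda)}(f_A,f_B)$, and this is truly elementary precisely because, in contrast with ${\mathcal G}$, the operation ${\mathcal L}$ is already defined as an affine combination of ${\mathcal A}$ and ${\mathcal H}$, both of which are known from \eqref{145} to extend their operator counterparts.
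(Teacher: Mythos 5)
Your proposal is correct and follows exactly the route the paper intends: the paper gives no explicit proof, merely declaring the corollary "immediately deduced" from Theorem \ref{th2} by taking $f=f_A$, $g=f_B$ and invoking \eqref{145} and \eqref{17}, which is precisely what you do. Your extra line identifying ${\mathcal L}_{\gamma_p(\lambda)}(f_A,f_B)=f_{L_{\gamma_p(\lambda)}(A,B)}$ via linearity of $X\longmapsto f_X$ is the only detail the paper leaves implicit, and you handle it correctly.
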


Now, in the aim to give lower bounds of ${\mathcal{HZ}}_{\lambda}(f,g)$, we need to introduce another notation. For $(f,g)\in{\mathcal D}(H)$ and $\lambda\in[0,1]$ we set
\begin{equation}\label{37}
\Theta_{\lambda}(f,g)=\frac{1}{2}\Big({\mathcal H}_{\lambda}(f,g)+{\mathcal H}_{1-\lambda}(f,g)\Big).
\end{equation}
By virtue of the second relation of \eqref{135} we have $\Theta_{\lambda}(f,g)=\Theta_{\lambda}(g,f)$, and by \eqref{14} and \eqref{25}, we have $\Theta_{\lambda}(f,g)\leq{\mathcal A}(f,g)$, for any $\lambda\in[0,1]$.

We need to prove the following lemma.

\begin{lemma}\label{lem7}
Let $(f,g)\in{\mathcal D}(H)$. Then the following assertions hold:\\
(i) The map $t\longmapsto{\mathcal H}_t(f,g)$ is point-wisely convex in $t\in[0,1]$. That is, for all
$x\in H,\; t_1,t_2\in[0,1]$ and $p\in[0,1]$ one has
$${\mathcal H}_{(1-p)t_1+pt_2}(f,g)(x)\leq(1-p){\mathcal H}_{t_1}(f,g)(x)+p{\mathcal H}_{t_2}(f,g)(x).$$
(ii) The map $t\longmapsto{\mathcal A}_t(f,g)-{\mathcal H}_t(f,g)$ is point-wisely concave in $t\in[0,1]$.\\
(iii) The map $t\longmapsto\Theta_t(f,g)$ is also point-wisely convex in $t\in[0,1]$.
\end{lemma}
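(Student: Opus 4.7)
The plan is to derive (ii) and (iii) as easy consequences of (i), so most of the effort goes into (i). Fix $x\in H$. The key tool is the variational formula
\[
\mathcal{H}_t(f,g)(x)=\sup_{y\in H}\bigl\{\operatorname{Re}\langle y,x\rangle-(1-t)f^{*}(y)-tg^{*}(y)\bigr\},
\]
obtained by unwinding the Fenchel conjugate in \eqref{125}. For each fixed $y$, the quantity in braces is affine in $t$ wherever finite, hence convex in $t$; the pointwise supremum of convex functions is convex, so $t\mapsto\mathcal{H}_t(f,g)(x)$ is convex on $(0,1)$.

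Concretely, for $t_1,t_2\in(0,1)$, $p\in[0,1]$, and $t=(1-p)t_1+pt_2$, I will split
\[
(1-t)f^{*}(y)+tg^{*}(y)=(1-p)\bigl[(1-t_1)f^{*}(y)+t_1g^{*}(y)\bigr]+p\bigl[(1-t_2)f^{*}(y)+t_2g^{*}(y)\bigr],
\]
write $\operatorname{Re}\langle y,x\rangle=(1-p)\operatorname{Re}\langle y,x\rangle+p\operatorname{Re}\langle y,x\rangle$, and apply $\sup(A+B)\le\sup A+\sup B$ to conclude
\[
\mathcal{H}_t(f,g)(x)\le(1-p)\mathcal{H}_{t_1}(f,g)(x)+p\mathcal{H}_{t_2}(f,g)(x).
\]
For the endpoints I invoke the convention \eqref{eq133}. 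A case such as $t_1=0$, $t_2\in(0,1)$ follows from the same splitting, once we identify $\sup_y\{\operatorname{Re}\langle y,x\rangle-f^{*}(y)\}=f^{**}(x)=f(x)$, which uses the Fenchel biconjugate identity valid on $\Gamma_0(H)$. The remaining case $(t_1,t_2)=(0,1)$ reduces to $\mathcal{H}_p(f,g)\le(1-p)f+pg=\mathcal{A}_p(f,g)$, which is part of \eqref{14}.

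For (ii), $\mathcal{A}_t(f,g)(x)=(1-t)f(x)+tg(x)$ is affine in $t$ on $\operatorname{dom}f\cap\operatorname{dom}g$, and is $+\infty$ for $x$ outside this set (the conventions of Remark~\ref{rem1} handling $t\in\{0,1\}$). An affine function minus the convex function from (i) is concave, yielding (ii). For (iii), the map $t\mapsto\mathcal{H}_{1-t}(f,g)(x)$ is the reflection under $t\mapsto 1-t$ of the convex map of (i), hence still convex, so $\Theta_t(f,g)(x)$ is the half-sum of two pointwise convex maps, therefore pointwise convex.

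The main obstacle is the handling of $t\in\{0,1\}$: the conventions $0\cdot(+\infty)=+\infty$ and $(+\infty)-(+\infty)=+\infty$, together with the piecewise definition \eqref{eq133}, prevent one from reading convexity off a single closed formula. The biconjugate identity $f^{**}=f$ on $\Gamma_0(H)$ is the bridge that reconciles the variational formula with the convention and makes the sup-of-affines argument extend cleanly to all of $[0,1]$.
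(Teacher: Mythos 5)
Your proof is correct and follows essentially the same route as the paper's: write ${\mathcal H}_t(f,g)(x)$ as a supremum over $x^*$ of functions affine in $t$, conclude convexity, and then obtain (ii) and (iii) from the affineness of $t\mapsto{\mathcal A}_t(f,g)(x)$ and the reflection $t\mapsto 1-t$. Your additional careful treatment of the endpoints $t\in\{0,1\}$ via the biconjugate identity $f^{**}=f$ on $\Gamma_0(H)$ is a welcome refinement that the paper's proof passes over silently, but it does not change the underlying argument.
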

\begin{proof}
(i) By definition, we have, for all $x\in H$,
$${\mathcal H}_t(f,g)(x):=\Big((1-t)f^*+tg^*\Big)^*(x):=\sup_{x^*\in H}\Big\{\Re e\langle x^*,x\rangle-(1-t)f^*(x^*)-tg^*(x^*)\Big\}.$$
Fixing $x\in H$, we consider the following family of functionals, indexed by $x^*\in H$,
$$t\longmapsto\Re e\langle x^*,x\rangle-(1-t)f^*(x^*)-tg^*(x^*),$$
which are all affine and so convex in $t\in[0,1]$. It follows that $t\longmapsto{\mathcal H}_t(f,g)(x)$ is convex as a supremum of a family of convex functionals.\\
(ii) For any $x\in H$, the map $t\longmapsto{\mathcal A}_{t}(f,g)(x)$ is affine and so concave. This, with (i), implies the desired result.\\
(iii) Since $t\longmapsto{\mathcal H}_t(f,g)(x)$ is convex then so is $t\longmapsto{\mathcal H}_{1-t}(f,g)(x)$, because the real function $t\longmapsto1-t$ is affine. This, with \eqref{37}, implies that $t\longmapsto\Theta_t(f,g)$ is point-wisely convex in $t\in[0,1]$.
\end{proof}

\begin{proposition}
Let $f$ and $g$ be as above. Then, for any $\lambda\in[0,1]$, we have
\begin{equation}\label{38}
{\mathcal H}(f,g)\leq\Theta_{\lambda}(f,g)\leq{\mathcal{HZ}}_{\lambda}(f,g)\leq{\mathcal A}(f,g)
\end{equation}
\end{proposition}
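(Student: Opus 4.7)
The plan is to prove the chain of three inequalities separately, reducing each to an application of either the fundamental inequality \eqref{14} or the convexity result in Lemma \ref{lem7}(i).

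For the rightmost inequality ${\mathcal{HZ}}_{\lambda}(f,g) \leq {\mathcal A}(f,g)$, I would apply the right-hand half of \eqref{14} at $t = \lambda$ and at $t = 1-\lambda$, obtaining
$${\mathcal G}_{\lambda}(f,g) \leq {\mathcal A}_{\lambda}(f,g), \qquad {\mathcal G}_{1-\lambda}(f,g) \leq {\mathcal A}_{1-\lambda}(f,g).$$
Taking the pointwise average of these two inequalities, the left-hand side is ${\mathcal{HZ}}_{\lambda}(f,g)$ by the definition \eqref{16}, while the right-hand side collapses to ${\mathcal A}(f,g)$ by the identity \eqref{25}. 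The middle inequality $\Theta_{\lambda}(f,g) \leq {\mathcal{HZ}}_{\lambda}(f,g)$ is handled in exactly the same manner, replacing the right-hand half of \eqref{14} by its left-hand half ${\mathcal H}_t(f,g) \leq {\mathcal G}_t(f,g)$ at $t = \lambda$ and $t = 1-\lambda$ and averaging, with the left-hand side now matching the definition \eqref{37} of $\Theta_{\lambda}$.

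For the leftmost inequality ${\mathcal H}(f,g) \leq \Theta_{\lambda}(f,g)$, I would invoke Lemma \ref{lem7}(i), which supplies the pointwise convexity of $t \longmapsto {\mathcal H}_t(f,g)(x)$ on $[0,1]$ for each fixed $x \in H$. Writing $\frac{1}{2} = \frac{1}{2}\lambda + \frac{1}{2}(1-\lambda)$, convexity applied at this midpoint gives
$${\mathcal H}(f,g)(x) = {\mathcal H}_{1/2}(f,g)(x) \leq \tfrac{1}{2}{\mathcal H}_{\lambda}(f,g)(x) + \tfrac{1}{2}{\mathcal H}_{1-\lambda}(f,g)(x) = \Theta_{\lambda}(f,g)(x),$$
which is exactly the desired functional inequality.

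The main subtlety, more than a genuine obstacle, is to respect the cautions of Remark \ref{rem1}: each step above must be read as a pointwise inequality in $\mathbb{R}\cup\{+\infty\}$, and care must be taken never to form a difference of two infinite values. In the argument just sketched, only sums and pointwise averages of nonnegative extended-real quantities appear, so the conventions are harmless. The endpoint cases $\lambda \in \{0,1\}$ follow directly from the extension rule \eqref{eq133}, under which both $\Theta_{\lambda}(f,g)$ and ${\mathcal{HZ}}_{\lambda}(f,g)$ reduce to $\tfrac{1}{2}(f+g) = {\mathcal A}(f,g)$ and the chain degenerates to the already-established inequality ${\mathcal H}(f,g) \leq {\mathcal A}(f,g)$ from \eqref{14}.
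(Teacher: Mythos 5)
Your proof is correct, and its overall architecture matches the paper's: the middle inequality comes from averaging the left half of \eqref{14} at $\lambda$ and $1-\lambda$, the leftmost from the pointwise convexity of $t\mapsto{\mathcal H}_t(f,g)$ in Lemma \ref{lem7}(i) applied at the midpoint, and the endpoint cases $\lambda\in\{0,1\}$ collapse via \eqref{eq133} to ${\mathcal H}(f,g)\leq{\mathcal A}(f,g)$. The one place you diverge is the rightmost inequality ${\mathcal{HZ}}_{\lambda}(f,g)\leq{\mathcal A}(f,g)$: the paper cites \eqref{27} (Theorem \ref{th3}), which in turn rests on the integral representation \eqref{24}, whereas you derive it directly by averaging ${\mathcal G}_t(f,g)\leq{\mathcal A}_t(f,g)$ at $t=\lambda,1-\lambda$ and invoking \eqref{25}. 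Your route is more elementary and self-contained (it only involves sums, so Remark \ref{rem1}'s cautions about differences of $+\infty$ never arise), at the cost of not exhibiting the sharper intermediate bound ${\mathcal{HZ}}_{\lambda}\leq{\mathcal K}_{\theta(\lambda)}$ that the paper's reference to \eqref{27} carries along for free.
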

\begin{proof}
For $\lambda=0$ or $\lambda=1$, \eqref{38} are reduced to ${\mathcal H}(f,g)\leq{\mathcal A}(f,g)$. Assume that below $\lambda\in(0,1)$. The two right inequalities of \eqref{38} follow from the left inequality in \eqref{14}, \eqref{16} and \eqref{27}. For proving the left inequality in \eqref{38} we proceed as follows, by using Lemma \ref{lem7},(i),
$$\Theta_{\lambda}(f,g):=\frac{{\mathcal H}_{\lambda}(f,g)+{\mathcal H}_{1-\lambda}(f,g)}{2}\geq
{\mathcal H}_{\frac{\lambda+(1-\lambda)}{2}}(f,g)={\mathcal H}_{1/2}(f,g):={\mathcal H}(f,g).$$
The proof is complete.
\end{proof}

\begin{proposition}\label{pr3}
Let $(f,g)\in{\mathcal D}(H)$. Then the map
$$t\longmapsto\frac{{\mathcal A}_t(f,g)-{\mathcal H}_t(f,g)}{t(1-t)}$$
is point-wisely integrable on $(0,1)$. That is, for any $x\in H$, the integral
\begin{equation}\label{40}
{\mathcal J}(f,g)(x):=\int_0^1\frac{{\mathcal A}_t(f,g)(x)-{\mathcal H}_t(f,g)(x)}{t(1-t)}dt
\end{equation}
exists in ${\mathbb R}\cup\{+\infty\}$. Furthermore, we have
\begin{equation}\label{41a}
{\mathcal J}(f,g)(x)=2\;\int_0^{1/2}\frac{{\mathcal A}(f,g)(x)-\Theta_t(f,g)(x)}{t(1-t)}dt.
\end{equation}
\end{proposition}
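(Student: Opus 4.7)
The plan is to dispatch the two assertions separately, each by a short argument. For the integrability statement, I would first note that the integrand is non-negative on $(0,1)$: by the left inequality of \eqref{14}, ${\mathcal H}_t(f,g) \leq {\mathcal A}_t(f,g)$ for every $t\in(0,1)$, and when $x$ lies outside $\mathrm{dom}\,f\cap\mathrm{dom}\,g$ the convention $(+\infty)-(+\infty)=+\infty$ from Remark~\ref{rem1} keeps the numerator non-negative as well. Measurability of $t\longmapsto {\mathcal H}_t(f,g)(x)$ follows from Lemma~\ref{lem7}(i), which asserts that this map is convex on $[0,1]$ with values in $\mathbb{R}\cup\{+\infty\}$, hence Borel measurable; the map $t\longmapsto {\mathcal A}_t(f,g)(x)$ is affine. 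A non-negative Borel-measurable function on $(0,1)$ always has a well-defined Lebesgue integral in $[0,+\infty]$, which settles the integrability claim.

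For the identity \eqref{41a}, I would denote the integrand in \eqref{40} by $\phi(t)$ and split the integral at $1/2$:
$$
{\mathcal J}(f,g)(x) = \int_0^{1/2}\phi(t)\,dt + \int_{1/2}^1\phi(t)\,dt.
$$
Substituting $s=1-t$ in the second integral and observing that $s(1-s)=t(1-t)$ is invariant under this change of variable, I rewrite the sum as
$$
\int_0^{1/2}\frac{\bigl({\mathcal A}_t(f,g)(x)-{\mathcal H}_t(f,g)(x)\bigr) + \bigl({\mathcal A}_{1-t}(f,g)(x)-{\mathcal H}_{1-t}(f,g)(x)\bigr)}{t(1-t)}\,dt.
$$
Now \eqref{25} gives ${\mathcal A}_t(f,g)(x) + {\mathcal A}_{1-t}(f,g)(x) = 2{\mathcal A}(f,g)(x)$, while the definition \eqref{37} of $\Theta_t$ yields ${\mathcal H}_t(f,g)(x) + {\mathcal H}_{1-t}(f,g)(x) = 2\Theta_t(f,g)(x)$. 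Substituting these two identities into the numerator produces exactly $2\bigl({\mathcal A}(f,g)(x)-\Theta_t(f,g)(x)\bigr)$, which, after pulling out the factor $2$, is precisely \eqref{41a}.

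The only subtlety worth flagging is the case $x\notin \mathrm{dom}\,f\cap\mathrm{dom}\,g$, where the conventions of Remark~\ref{rem1} force ${\mathcal A}(f,g)(x)=+\infty$ and ${\mathcal A}(f,g)(x)-\Theta_t(f,g)(x)=+\infty$, so that both sides of \eqref{41a} equal $+\infty$ and the manipulations above are trivially consistent. I do not anticipate any genuine obstacle here: the heart of the argument is the point-wise symmetry $\phi(t)\leftrightarrow\phi(1-t)$ combined with the additive decompositions \eqref{25} and \eqref{37}, and convergence issues are avoided entirely because the integrand is non-negative and we are content with equality in $\mathbb{R}\cup\{+\infty\}$.
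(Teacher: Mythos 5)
Your proof is correct and follows essentially the same route as the paper: the identity \eqref{41a} is obtained in both cases from the substitution $t\mapsto 1-t$ together with \eqref{25} and the definition \eqref{37} of $\Theta_t$. The only (harmless) difference is in the integrability step, where you invoke Borel measurability plus non-negativity of the integrand, while the paper deduces continuity on $(0,1)$ from the point-wise concavity of $t\longmapsto{\mathcal A}_t(f,g)-{\mathcal H}_t(f,g)$ (Lemma~\ref{lem7}(ii)); your version is if anything slightly more careful, since continuity alone would not guarantee existence of the improper integral without the sign condition you make explicit.
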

\begin{proof}
Because the map $t\longmapsto{\mathcal A}_t(f,g)-{\mathcal H}_t(f,g)$ is point-wisely concave in $t\in[0,1]$ then the map
$$t\longmapsto\frac{{\mathcal A}_t(f,g)-{\mathcal H}_t(f,g)}{t(1-t)}$$
is point-wisely continuous on $(0,1)$ and so its point-wise integral over $(0,1)$ exists in ${\mathbb R}\cup\{+\infty\}$. It is easy to see that (by using the change of variables $t=1-s$)
\begin{equation}\label{41b}
{\mathcal J}(f,g)(x):=\int_0^1\frac{{\mathcal A}_{1-t}(f,g)(x)-{\mathcal H}_{1-t}(f,g)(x)}{t(1-t)}dt
\end{equation}
Adding side to side \eqref{40} and \eqref{41b} we obtain, with the help of \eqref{25},
\begin{equation}\label{41c}
{\mathcal J}(f,g)(x):=\int_0^1\frac{{\mathcal A}(f,g)(x)-\Theta_{t}(f,g)(x)}{t(1-t)}dt.
\end{equation}
The map
$$t\longmapsto\frac{{\mathcal A}(f,g)(x)-\Theta_{t}(f,g)(x)}{t(1-t)}$$
is symmetric around $1/2$, for any $x\in H$, we then deduce \eqref{41a} from \eqref{41c}, so completes the proof.
\end{proof}

If in \eqref{40} we take $f=f_A$ and $g=f_B$, with $A,B\in{\mathcal B}^{+*}(H)$ then we obtain, by using \eqref{145},
\begin{multline}\label{41d}
{\mathcal J}(f_A,f_B)=\int_0^1\frac{{\mathcal A}_t(f_A,f_B)-{\mathcal H}_t(f_A,f_B)}{t(1-t)}dt\\=\int_0^1
\frac{f_{A\nabla_tB}-f_{A!_tB}}{t(1-t)}dt=f_{J(A,B)},
\end{multline}
where $J(A,B)$ is given by
\begin{equation}\label{42}
J(A,B)=\int_0^1\frac{A\nabla_tB-A!_tB}{t(1-t)}dt.
\end{equation}
The operator integral $J(A,B)$ can be exactly computed as recited in the following result.

\begin{proposition}\label{pr5}
With the above, we have
\begin{equation}\label{43}
J(A,B)=(B-A)A^{-1}S(A|B),
\end{equation}
where $S(A|B)$ refers to the relative operator entropy given by
$$S(A|B):=A^{1/2}\log\Big(A^{-1/2}BA^{-1/2}\Big)A^{1/2}.$$
\end{proposition}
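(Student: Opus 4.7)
The plan is a two-step reduction: first reduce to the base case $A = I$ via the substitution $X := A^{-1/2}BA^{-1/2}$, then evaluate the resulting operator integral by reducing it to a scalar integral through the continuous functional calculus applied to the positive operator $X$.

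For the reduction step, I would exploit the factorizations
\[
A\nabla_tB = A^{1/2}(I\nabla_t X)A^{1/2}, \qquad A!_tB = A^{1/2}(I!_t X)A^{1/2},
\]
where the second equality follows from the identity $T^{-1} = A^{1/2}(A^{1/2}TA^{1/2})^{-1}A^{1/2}$ applied to $T = (1-t)A^{-1} + tB^{-1}$, together with $A^{1/2}B^{-1}A^{1/2} = X^{-1}$. Substituting into \eqref{42} and pulling $A^{1/2}$ outside the integral gives $J(A,B) = A^{1/2} J(I,X) A^{1/2}$, which isolates the dependence on $A$.

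For the scalar reduction, I would first verify the algebraic identity
\[
(1-t) + tx - \bigl((1-t) + t/x\bigr)^{-1} = \frac{t(1-t)(x-1)^2}{(1-t)x + t} \qquad (x > 0),
\]
so that the corresponding scalar integrand simplifies to $(x-1)^2/((1-t)x + t)$. The substitution $u = (1-t)x + t$ then yields $\int_0^1 (x-1)^2/((1-t)x+t)\,dt = (x-1)\log x$, and lifting this identity through the spectral theorem (justified by joint continuity of the integrand on $[0,1] \times \sigma(X)$) gives the operator identity $J(I,X) = (X - I)\log(X)$.

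To conclude, I would use that $X - I$ and $\log X$ are commuting functions of $X$ to rewrite
\[
A^{1/2}(X - I)\log(X)A^{1/2} = \bigl[A^{1/2}(X-I)A^{1/2}\bigr]\, A^{-1} \,\bigl[A^{1/2}\log(X)A^{1/2}\bigr] = (B-A)A^{-1}S(A|B),
\]
since $A^{1/2}(X-I)A^{1/2} = B - A$ and $A^{1/2}\log(X)A^{1/2} = S(A|B)$ by definition. I expect the main obstacle to be the careful bookkeeping in this last step: one has to legitimately insert $A^{-1}A = I$ between the two commuting factors in the sandwich precisely to match the non-symmetric final form $(B-A)A^{-1}S(A|B)$, since $B-A$ and $S(A|B)$ need not commute in general. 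The scalar integration and spectral transfer are otherwise routine.
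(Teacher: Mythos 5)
Your proposal is correct and follows essentially the same route as the paper: reduce to the scalar identity $\int_0^1\frac{a\nabla_t b-a!_t b}{t(1-t)}\,dt=(a-b)(\log a-\log b)$, lift it to $J(I,T)=(T-I)\log T$ by functional calculus, and use the transformer property $J(A,B)=A^{1/2}J\bigl(I,A^{-1/2}BA^{-1/2}\bigr)A^{1/2}$ before the final algebraic rearrangement. The only difference is that you verify the factorizations $A\nabla_tB=A^{1/2}(I\nabla_tX)A^{1/2}$ and $A!_tB=A^{1/2}(I!_tX)A^{1/2}$ by hand, whereas the paper simply invokes the Kubo--Ando theory of operator means.
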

\begin{proof}
By Kubo-Ando theory \cite{KA}, we first compute the integral $J(A,B)$ for scalar case. That is, we need to compute the following real integral
$$J(a,b)=\int_0^1\frac{a\nabla_tb-a!_tb}{t(1-t)}dt:=\int_0^1\frac{(1-t)a+tb-\big((1-t)(1/a)+t(1/b)\big)^{-1}}{t(1-t)},$$
for any real numbers $a,b>0$. Simple computation leads to (after all reductions)
$$J(a,b)=\int_0^1\frac{(a-b)^2}{ta+(1-t)b}dt=(a-b)\big(\log a-\log b\big).$$
It follows that, for any $T\in{\mathcal B}^{+*}(H)$ we have
\begin{equation}\label{44}
J(I,T)=(T-I)\log T,
\end{equation}
where $I$ denotes the identity operator of ${\mathcal B}^{+*}(H)$. Since $(A,B)\longmapsto A\nabla_tB$ and $(A,B)\longmapsto A!_tB$ are operator means in the Kubo-Ando sense then we can write, by using \eqref{44} with $T=A^{-1/2}BA^{-1/2}$,
\begin{multline*}
J(A,B)=A^{1/2}J\Big(I,A^{-1/2}BA^{-1/2}\Big)A^{1/2}\\
=A^{1/2}\Big(A^{-1/2}BA^{-1/2}-I\Big)\log\Big(A^{-1/2}BA^{-1/2}\Big)A^{1/2}.
\end{multline*}
The desired result follows after a simple manipulation.
\end{proof}

\begin{remark}
From \eqref{42}, it is immediate that $J(A,B)$ is a positive operator, since $A\nabla_tB\geq A!_tB$. Further, $J(A,B)$ is symmetric in $A$ and $B$. These properties of $J(A,B)$ are not easy to deduce from \eqref{43}.
\end{remark}

Now, we are in a position to state the following main result which gives a lower bound of ${\mathcal{HZ}}_{\lambda}(f,g)$.

\begin{theorem}\label{th5}
Let $(f,g)\in{\mathcal D}(H)$ and $\lambda\in[0,1]$. Then we have
\begin{equation}\label{50}
{\mathcal{HZ}}_{\lambda}(f,g)\\
\geq{\mathcal K}_{\delta(\lambda)}(f,g)-\alpha(\lambda)\frac{\sin(\pi\lambda)}{2\pi}{\mathcal J}(f,g),
\end{equation}
where ${\mathcal J}(f,g)$ was defined in Proposition \ref{pr3} and
$$\alpha(\lambda)=(2\lambda-1)^2,\;\; \delta(\lambda)=1-4\lambda(1-\lambda)\sin(\pi\lambda).$$
\end{theorem}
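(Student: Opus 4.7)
The plan is to start from the integral identity \eqref{24} and bound the kernel $HZ_{\lambda}\bigl(\tfrac1t,\tfrac{1}{1-t}\bigr)$ pointwise in $t$ by a convex combination of $\tfrac{1}{\sqrt{t(1-t)}}$ and $\tfrac{1}{2t(1-t)}$; the two resulting integrals are then exactly the ones that compute ${\mathcal A}(f,g)-{\mathcal G}(f,g)$ and ${\mathcal J}(f,g)$, respectively.

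Concretely, first I would invoke the scalar Heron--Heinz inequality \eqref{5}, with $a=1/t$ and $b=1/(1-t)$, to get
\begin{equation*}
HZ_{\lambda}\Bigl(\tfrac{1}{t},\tfrac{1}{1-t}\Bigr)\leq K_{\alpha(\lambda)}\Bigl(\tfrac{1}{t},\tfrac{1}{1-t}\Bigr)=\bigl(1-\alpha(\lambda)\bigr)\frac{1}{\sqrt{t(1-t)}}+\frac{\alpha(\lambda)}{2\,t(1-t)},
\end{equation*}
using $\sqrt{\tfrac{1}{t}\cdot\tfrac{1}{1-t}}=\tfrac{1}{\sqrt{t(1-t)}}$ and $\tfrac12\bigl(\tfrac1t+\tfrac{1}{1-t}\bigr)=\tfrac{1}{2t(1-t)}$. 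Since ${\mathcal A}_t(f,g)-{\mathcal H}_t(f,g)\geq 0$ pointwise, by \eqref{14}, the pointwise identity \eqref{24} then yields
\begin{equation*}
{\mathcal A}(f,g)-{\mathcal{HZ}}_{\lambda}(f,g)\leq\frac{(1-\alpha(\lambda))\sin(\pi\lambda)}{\pi}\!\int_0^1\!\frac{{\mathcal A}_t-{\mathcal H}_t}{\sqrt{t(1-t)}}\,dt+\frac{\alpha(\lambda)\sin(\pi\lambda)}{2\pi}\!\int_0^1\!\frac{{\mathcal A}_t-{\mathcal H}_t}{t(1-t)}\,dt.
\end{equation*}

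Next I would identify the two integrals: by \eqref{225} the first one equals $\pi\bigl({\mathcal A}(f,g)-{\mathcal G}(f,g)\bigr)$, while by the definition \eqref{40} the second one equals ${\mathcal J}(f,g)$. Using $1-\alpha(\lambda)=4\lambda(1-\lambda)$ and rearranging, one gets
\begin{equation*}
{\mathcal{HZ}}_{\lambda}(f,g)\geq{\mathcal A}(f,g)-4\lambda(1-\lambda)\sin(\pi\lambda)\bigl({\mathcal A}(f,g)-{\mathcal G}(f,g)\bigr)-\alpha(\lambda)\frac{\sin(\pi\lambda)}{2\pi}{\mathcal J}(f,g).
\end{equation*}
Setting $\mu:=4\lambda(1-\lambda)\sin(\pi\lambda)$, the expression ${\mathcal A}(f,g)-\mu({\mathcal A}(f,g)-{\mathcal G}(f,g))$ equals $(1-\mu){\mathcal A}(f,g)+\mu{\mathcal G}(f,g)$, which is precisely ${\mathcal K}_{1-\mu}(f,g)={\mathcal K}_{\delta(\lambda)}(f,g)$ by definition \eqref{15}, giving \eqref{50}.

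The main subtlety, as in the proof of Theorem~\ref{th3}, is handling points $x\in H$ where some of the functionals may take the value $+\infty$: one cannot freely subtract or rearrange such terms (cf.\ Remark~\ref{rem1}). I would dispose of this at the end by distinguishing the case ${\mathcal K}_{\delta(\lambda)}(f,g)(x)=+\infty$ (where the inequality is trivial) from the case ${\mathcal K}_{\delta(\lambda)}(f,g)(x)<+\infty$, which forces both ${\mathcal A}(f,g)(x)$ and ${\mathcal G}(f,g)(x)$ to be finite by \eqref{15}, so that the pointwise manipulations above are legitimate. The boundary values $\lambda=0,1$ are trivial since both $\sin(\pi\lambda)$ and $\alpha(\lambda)-1$ vanish there, reducing \eqref{50} to ${\mathcal{HZ}}_{0}(f,g)={\mathcal A}(f,g)\geq{\mathcal K}_{1}(f,g)={\mathcal A}(f,g)$.
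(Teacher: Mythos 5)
Your proposal is correct and follows essentially the same route as the paper's own proof: applying the scalar bound \eqref{5} to the kernel in \eqref{24}, decomposing $K_{\alpha(\lambda)}\bigl(\tfrac1t,\tfrac{1}{1-t}\bigr)$ into the two kernels identified via \eqref{225} and \eqref{40}, and rearranging, with the same treatment of the $+\infty$ values and boundary cases. No substantive differences.
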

\begin{proof}
For $\lambda=0$ or $\lambda=1$, \eqref{50} are immediate after a simple discussion as in the proof of Theorem \ref{th3}. We now assume that $\lambda\in(0,1)$. By \eqref{24}, with \eqref{5}, we have
\begin{multline}\label{52}
{\mathcal A}(f,g)-{\mathcal{HZ}}_{\lambda}(f,g)\\
\leq\frac{\sin(\pi\lambda)}{\pi}\int_0^1K_{\alpha(\lambda)}\Big(\frac{1}{t},\frac{1}{1-t}\Big)
\Big({\mathcal A}_t(f,g)-{\mathcal H}_t(f,g)\Big)dt,
\end{multline}
where $\alpha(\lambda)=(2\lambda-1)^2$. By \eqref{3a} we have
$$K_{\alpha(\lambda)}=\big(1-\alpha(\lambda)\big)\frac{1}{\sqrt{t(1-t)}}+\frac{\alpha(\lambda)}{2}\frac{1}{t(1-t)}.$$
With this \eqref{52} yields
\begin{multline*}
{\mathcal A}(f,g)-{\mathcal{HZ}}_{\lambda}(f,g)\leq4\lambda(1-\lambda)\frac{\sin(\pi\lambda)}{\pi}
\int_0^1\frac{1}{\sqrt{t(1-t)}}\Big({\mathcal A}_t(f,g)-{\mathcal H}_t(f,g)\Big)dt\\
+\frac{\alpha(\lambda)}{2}\frac{\sin(\pi\lambda)}{\pi}
\int_0^1\frac{{\mathcal A}_t(f,g)-{\mathcal H}_t(f,g)}{t(1-t)}dt.
\end{multline*}
According to \eqref{225}, with \eqref{40}, we obtain
$${\mathcal A}(f,g)-{\mathcal{HZ}}_{\lambda}(f,g)\leq4\lambda(1-\lambda)\sin(\pi\lambda)\Big({\mathcal A}(f,g)-{\mathcal G}(f,g)\Big)+
\frac{\alpha(\lambda)}{2}\frac{\sin(\pi\lambda)}{\pi}{\mathcal J}(f,g).$$
We then deduce our desired inequality \eqref{50}, after a simple discussion, as in the proof of Theorem \ref{th3}. Details are routine and therefore omitted here.
\end{proof}

The operator version of Theorem \ref{th5} reads as follows.

\begin{corollary}
Let $A,B\in{\mathcal B}^{+*}(H)$ and $\lambda\in[0,1]$. Then there holds:
\begin{equation}\label{53}
HZ_{\lambda}(A,B)\geq K_{\delta(\lambda)}(A,B)-\alpha(\lambda)\frac{\sin(\pi\lambda)}{2\pi}J(A,B),
\end{equation}
where $\alpha(\lambda)$ and $\delta(\lambda)$ are as in Theorem \ref{th5} and $J(A,B)$ is as in Proposition \ref{pr5}.
In particular, for any $a,b>0$ and $\lambda\in[0,1]$, we have
\begin{multline}\label{54}
HZ_{\lambda}(a,b)\geq\frac{a+b}{2}-2\lambda(1-\lambda)\sin(\pi\lambda)\big(\sqrt{a}-\sqrt{b}\big)^2\\
-(2\lambda-1)^2\;\frac{\sin(\pi\lambda)}{2\pi}(b-a)\log(b/a).
\end{multline}
\end{corollary}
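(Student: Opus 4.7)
The plan is to deduce \eqref{53} as an immediate specialization of the functional inequality \eqref{50} of Theorem~\ref{th5} to quadratic forms, and then to recover the scalar refinement \eqref{54} either by restricting to a one-dimensional Hilbert space or by evaluating the real integrands produced in the proof of Proposition~\ref{pr5}.

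First I would substitute $f=f_A$ and $g=f_B$ into \eqref{50}. The extension identities \eqref{145} together with \eqref{17} turn the Heinz and Heron functional means into $f_{HZ_\lambda(A,B)}$ and $f_{K_{\delta(\lambda)}(A,B)}$ respectively, while \eqref{41d} supplies the identity ${\mathcal J}(f_A,f_B)=f_{J(A,B)}$ with $J(A,B)$ as in Proposition~\ref{pr5}. Since the map $T\longmapsto f_T$ is real-linear on the space of self-adjoint operators, the right-hand side of \eqref{50} collapses to the single quadratic form generated by
\[
K_{\delta(\lambda)}(A,B)-\alpha(\lambda)\frac{\sin(\pi\lambda)}{2\pi}J(A,B).
\]
The standard equivalence $f_S\le f_T\Longleftrightarrow S\le T$ (valid for self-adjoint $S,T$) then converts the point-wise functional inequality at every $x\in H$ into the Löwner inequality \eqref{53}.

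To pass from \eqref{53} to \eqref{54}, I would specialize to $H=\mathbb{C}$, identifying positive invertible operators with positive scalars $a,b$. From \eqref{3a} one rewrites
\[
K_{\delta(\lambda)}(a,b)=\frac{a+b}{2}-\bigl(1-\delta(\lambda)\bigr)\frac{(\sqrt{a}-\sqrt{b})^2}{2},
\]
and the relation $1-\delta(\lambda)=4\lambda(1-\lambda)\sin(\pi\lambda)$ produces the first two terms on the right-hand side of \eqref{54}. The remaining term is then furnished by the scalar formula $J(a,b)=(a-b)(\log a-\log b)=(b-a)\log(b/a)$ which was already established inside the proof of Proposition~\ref{pr5}.

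The only point requiring caution, in view of Remark~\ref{rem1}, is the legitimacy of subtracting functionals that might take the value $+\infty$. On quadratic forms this concern disappears: $\mathrm{dom}\,f_A=\mathrm{dom}\,f_B=H$, every functional mean appearing in \eqref{50} evaluates to a finite quadratic form, and all the subtractions are honest, so the translation from functional inequality to operator inequality is completely mechanical. There is no genuine analytical obstacle beyond this bookkeeping; the scalar particularization is a purely algebraic verification.
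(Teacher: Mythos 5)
Your proposal is correct and follows essentially the same route as the paper: substituting $f=f_A$, $g=f_B$ into \eqref{50} and invoking \eqref{145}, \eqref{17} and \eqref{41d} to obtain \eqref{53}, then specializing to scalars for \eqref{54}. The extra details you supply (the rewriting of $K_{\delta(\lambda)}(a,b)$ via $1-\delta(\lambda)=4\lambda(1-\lambda)\sin(\pi\lambda)$ and the identity $J(a,b)=(b-a)\log(b/a)$) are exactly the ``simple manipulations'' the paper omits, and they check out.
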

\begin{proof}
Taking $f=f_A$ and $g=f_B$ in \eqref{50}, with the help of \eqref{17} and \eqref{41d}, we obtain \eqref{53}. We then deduce \eqref{54} after simple manipulations. Details are simple and therefore omitted here.
\end{proof}

We now present some comments about comparison between \eqref{5a}, \eqref{5b} and \eqref{54}. First, we mention that, for $\lambda=0$ or $\lambda=1$, \eqref{5a} and \eqref{54}, which are both reduced to an equality, imply \eqref{5b}. For $\lambda=1/2$, \eqref{5b} and \eqref{54} are both reduced to the same equality and yield \eqref{5a}. We now consider the general case. For this purpose, by putting $a=t^2, b=1$ we consider the following function
$$f_{\lambda}(t) = g_{\lambda}(t) (t-1) \log t$$
for $t>0$ and $0\leq \lambda \leq 1$, where
\begin{equation}\label{55}
g_{\lambda}(t)=(t+1)\left\{\left(\lambda(1-\lambda)-\frac{(2\lambda-1)^2\sin(\pi\lambda)}{\pi}\right)
-\frac{2\lambda(1-\lambda) (t-1)\sin(\pi\lambda)}{(t+1)\log t}\right\}.
\end{equation}
We find
$$\lim_{t \to 0} g_{\lambda}(t) = h(\lambda),$$
where
$$h(\lambda) = \lambda(1-\lambda) -\frac{(2\lambda-1)^2 \sin(\pi \lambda)}{\pi}.$$
Here we show $h(\lambda) \geq 0$ for $0\leq \lambda \leq 1$.
We calculate
$$h'(\lambda) = \frac{(1-2\lambda)}{\pi} k(\lambda),$$
where
$$k(\lambda) = \pi -\pi(1-2\lambda)\cos(\pi \lambda) + 4\sin(\pi \lambda).$$
Since $k(\lambda) = k(1-\lambda)$, we show $k(\lambda) \geq 0$ for $0\leq \lambda \leq \frac{1}{2}$. Then we calculate
$$k'(\lambda) =\pi(6\cos(\pi \lambda)+\pi(1-2\lambda)\sin(\pi\lambda)) \geq 0$$
for $0<\lambda \leq \frac{1}{2}$ so that we have $k(\lambda) \geq k(0) =0$. Therefore $h'(\lambda) \geq 0$ for $0\leq \lambda \leq \frac{1}{2}$ and $h'(\lambda) \leq 0$ for $\frac{1}{2} \leq \lambda \leq 1$ and $h(0)=h(1)=0$ which implies $h(\lambda) \geq 0$ for $0 \leq \lambda \leq 1$. In addition, from \eqref{55} we find that $\lim_{t\to\infty}g_{\lambda}(t)=\infty$ by $h(\lambda) \geq 0$.
Therefore we have $f_{\lambda}(t) \geq 0$ for enough large $t$ and enough small $t$ which means the lower bound in \eqref{54} gives better bound than that in \eqref{5a}. Moreover the same results are shown by numerical computations in almost cases for $t>0$ and $0 \leq \lambda \leq 1$.
However, it is not true that $f_{\lambda}(t) \geq 0$ for all $t>0$ and $0 \leq \lambda \leq 1$ in general, since we have counter-examples such as $f_{0.9}(0.75)\simeq -0.0000722089$ and $f_{0.9}(1.5)\simeq -0.000197205$.

We also set the function, the square of R.H.S. in \eqref{54} minus R.H.S. in \eqref{5b} with $a=t$ and $b=1$ by
$
\frac{1}{4}\left(\alpha_{\lambda}(t)+\beta_{\lambda}(t)^2\right)
$
for $t>0$ and $0\leq \lambda \leq 1$. Where
$$\alpha_{\lambda}(t) = 2(1-r_{\lambda})(t-1)^2-(t+1)^2$$
and
$$\beta_{\lambda}(t) =t+1-4\lambda(1-\lambda)(\sqrt{t}-1)^2\sin(\pi \lambda)-\frac{(2\lambda -1)^2(t-1)(\log t) \sin(\pi \lambda)}{\pi}.$$
We easily find the lower bounds both in \eqref{54} and \eqref{5b} are symmetric with respect to $\lambda=\frac{1}{2}$ so that we consider the case $\lambda \in [0,\frac{1}{2}]$. We also find that $\alpha_{\lambda}(t)=2(1-\lambda)(t-1)^2-(t+1)^2$ is decreasing in $\lambda \in [0,\frac{1}{2}]$ for any $t>0$, since $r_{\lambda}=\lambda$.
Numerical computations show $\alpha_{\lambda}(t) +\beta_{\lambda}(t)^2 \geq 0$ for any $t>0$ and $\lambda \in [0,\frac{1}{2}]$ which means our lower bound in \eqref{54} is tighter than that in \eqref{5b}. However we have not found the analytical proof for the inequality $\alpha_{\lambda}(t) +\beta_{\lambda}(t)^2 \geq 0$ for any $t>0$ and $\lambda \in [0,\frac{1}{2}]$ and also not found any counter-examples for the inequality $\alpha_{\lambda}(t) +\beta_{\lambda}(t)^2 \geq 0$.\\

Finally, we state the following remark which may be of interest for the reader.

\begin{remark}
Let ${\mathcal M}(f,g)$ be one of the functional means (depending on $\lambda\in[0,1]$ or not) previously introduced. All theoretical results and inequalities investigated in this paper are still valid for any $f,g:H\longrightarrow{\mathbb R}\cup\{+\infty\}$ such that ${\rm dom}\;f\cap{\rm dom}\;g\neq\emptyset$. As the reader can remark it, the condition $f,g\in\Gamma_0(H)$ was not needed in the proofs. In fact, the condition $f,g\in\Gamma_0(H)$ is needed only when we want to ensure the axiom ${\mathcal M}(f,f)=f$ for any $f\in\Gamma_0(H)$. This because every functional mean was introduced as an extension of its related operator mean, denote it by $m(A,B)$ which, in its turn, should satisfy by definition the idempotent axiom for an operator mean, namely $m(A,A)=A$ for any $A\in{\mathcal B}^{+*}(H)$.
\end{remark}

\section*{\bf Acknowledgment}
The author (S.F.) was partially supported by JSPS KAKENHI Grant Number 16K05257.

\bibliographystyle{amsplain}

\end{document}